\def\udcs{519.148} %Here the author places classificators of the paper according to Russian classification system
\def\mscs{05B99} %Here the author places  classificators according to the AMS classification list.
\newtheorem{lemma}{Lemma}
\newtheorem{theorem}{Theorem}
\def\logo{{\bf\huge S\raisebox{0.2ex}{\hspace{0.55ex}\raisebox{0.05ex}e\hspace{-1.65ex}$\bigcirc$}MR}}
\def\semrtop{
  \vbox{
     \noindent\logo\hfill\raisebox{1ex}{ISSN 1813-3304}\par
     \vspace{5mm}
     \begin{otherlanguage}{russian}
     \begin{center}
%     {\huge \CYRS\CYRI\CYRB\CYRI\CYRR\CYRS\CYRK\CYRI\CYRE\ \ \CYREREV\CYRL\CYRE\CYRK\CYRT\CYRR\CYRO\CYRN\CYRN\CYRERY\CYRE} \\[2mm]  %Headings in Russian 
%     {\huge \CYRM\CYRA\CYRT\CYRE\CYRM\CYRA\CYRT\CYRI\CYRCH\CYRE\CYRS\CYRK\CYRI\CYRE\ \CYRI\CYRZ\CYRV\CYRE\CYRS\CYRT\CYRI\CYRYA} \\[2mm]
     {\huge SIBIRSKIE \ \`ELEKTRONNYE} \\[2mm]  %Headings in Russian
     {\huge MATEMATICHESKIE IZVESTIYA} \\[2mm]
     {\large Siberian Electronic Mathematical Reports} \\[1mm]
     {\LARGE\tt{http://semr.math.nsc.ru}}\\[0.5mm]
%     {\small \CYRI\cyrn\cyrs\cyrt\cyri\cyrt\cyru\cyrt\ \cyrm\cyra\cyrt\cyre\cyrm\cyra\cyrt\cyri\cyrk\cyri\ \cyri\cyrm. \CYRS.\CYRL.\CYRS\cyro\cyrb\cyro\cyrl\cyre\cyrv\cyra\ \CYRS\CYRO\ \CYRR\CYRA\CYRN}\\[-1mm]
%     {\small Sobolev Institute of Mathematics SB RAS}
     \end{center}
     \vspace{-3mm}
     \noindent
     \begin{tabular}{c}
     \hphantom{aaaaaaaaaaaaaaaaaaaaaaaaaaaaaaaaaaaaaaaaaaaaaaaaaaaaaaaaaaaaaaaaaaaaaa} \\
     \hline\hline
     \end{tabular}
     \vspace{1mm}
     {\flushleft\it Vol. 13, pp. 987--1016 (2016) \hfill{\rm\small UDC \udcs}} %Volume, pages in Russian, %filled by Editorial board
     \newline {\rm\small DOI~10.17377/semi.2016.13.079}\hfill{\rm\small MSC\ \ \mscs }\par
     \end{otherlanguage}
  }%\vbox
}%\def\semrtop
\begin{document}
\renewcommand{\refname}{References}
\renewcommand{\proofname}{Proof.}
\thispagestyle{empty}

\title[Completely regular codes in the infinite hexagonal grid]{Completely regular codes in the infinite hexagonal grid}
\author{{S.~V. Avgustinovich, D.~S. Krotov, A.~Yu. Vasil'eva}}%

% \address{Sergey Vladimirovich Avgustinovich,
% \newline\hphantom{iiiiiiiii} Denis Stanislavovich Krotov, 
% \newline\hphantom{iiiiiiiiiiiiiii} Anastasia Yurievna Vasil'eva
% \newline\hphantom{iii} Sobolev Institute of Mathematics,
% \newline\hphantom{iii} pr. Akademika Koptyuga, 4,
% \newline\hphantom{iii} 630090, Novosibirsk, Russia}%
% \email{avgust@math.nsc.ru, krotov@math.nsc.ru, vasilan@math.nsc.ru}%

\address{Sergey Vladimirovich Avgustinovich
\newline\hphantom{iii} Sobolev Institute of Mathematics,
\newline\hphantom{iii} pr. Koptyuga, 4,
\newline\hphantom{iii} 630090, Novosibirsk, Russia}%
\email{\href{mailto:avgust@math.nsc.ru}{avgust@math.nsc.ru}}%

\address{Denis Stanislavovich Krotov
\newline\hphantom{iii} Sobolev Institute of Mathematics,
\newline\hphantom{iii} pr. Koptyuga, 4,
\newline\hphantom{iii} 630090, Novosibirsk, Russia}%
\email{\href{mailto:krotov@math.nsc.ru}{krotov@math.nsc.ru}}%

\address{Anastasia Yurievna Vasil'eva
\newline\hphantom{iii} Sobolev Institute of Mathematics,
\newline\hphantom{iii} pr. Koptyuga, 4,
\newline\hphantom{iii} 630090, Novosibirsk, Russia}%
\email{\href{mailto:vasilan@math.nsc.ru}{vasilan@math.nsc.ru}}%

\thanks{\sc Avgustinovich, S.V., Krotov, D.S., Vasil'eva, A.Yu.,
Completely regular codes in the infinite hexagonal grid}
\thanks{\copyright \ 2016 Avgustinovich S.V., Krotov D.S., Vasil'eva A.Yu.}
\thanks{\rm The results of the paper were partially presented at the International Conference ``Mal'tsev Meeting'' in November 2013, Novosibirsk, Russia.}
\thanks{\it Received  April, 15, 2016, published  November, 15,  2016.}%

\semrtop \vspace{1cm}
\maketitle {\small
\begin{quote}
\noindent{\sc Abstract. } 
A set $C$ of vertices of a simple graph is called a completely regular code if 
for each $i=0$, $1$, $2$, \ldots and $j = i-1$, $i$, $i+1$, all vertices at distance $i$ 
from $C$ have the same number $s_{ij}$ of neighbors at distance $j$ from $C$.
We characterize the completely regular codes in the infinite hexagonal grid graph.
\medskip

\noindent{\bf Keywords:} 
completely regular code, 
perfect coloring, 
equitable partition, 
partition design, 
hexagonal grid.
 \end{quote}
}

\newlength{\dk} \dk=1.7em 
\newlength{\dktmp} \dktmp=0.7\dk 
\tikzset{x=1.0\dk,y=1.0\dk,
 cell/.style={circle,draw=black, thin, minimum size=0.77\dk},
 }
\def\ucolor#1#2{ ++(1,0.29) node [cell,fill=#1] {\raisebox{-0.25em}[0.2em][0em]{\makebox[0mm][c]{#2}}} ++(0,-0.29) }
\def\dcolor#1#2{ ++(1,-0.29) node [cell,fill=#1] {\raisebox{-0.25em}[0.2em][0em]{\makebox[0mm][c]{#2}}} ++(0,0.29) }
\def\Ucolor#1#2{ ++(1,0.29) node [cell,very thick,fill=#1] {\raisebox{-0.25em}[0.2em][0em]{\makebox[0mm][c]{#2}}} ++(0,-0.29) }
\def\Dcolor#1#2{ ++(1,-0.29) node [cell,very thick,fill=#1] {\raisebox{-0.25em}[0.2em][0em]{\makebox[0mm][c]{#2}}} ++(0,0.29) }
\def\uq{\ucolor{white}{$\boldsymbol ?$}}
\def\dq{\dcolor{white}{$\boldsymbol ?$}}
\def\uo{\ucolor{white}{$~$}}%
\def\do{\dcolor{white}{$~$}}%

\section{Introduction}\label{sec:intro}

Completely regular codes 
and corresponding vertex colorings 
(we call them dis\-tance-re\-gu\-lar colorings, 
a partial case of perfect colorings or equitable partitions) 
play important role 
in the theory of distance-regular graphs \cite{Brouwer}.
Recall that a distance-regular graph is a regular graph 
such that every singleton is a completely regular code.
The existence of completely regular codes 
in non-distance-regular graphs 
shows that some phenomena taking place in distance-regular graphs
can also happen in other graphs.
For example, 
the weight distribution of 
one completely regular code with respect to another one
satisfies some equations \cite{Kro:struct},
which helps to study such codes 
(as well as other objects with similar algebraic properties)
and to prove the nonexistence for some given parameters.
So, it is natural 
to study completely regular codes 
in different kind of graphs.
However, 
the most interesting cases are transitive graphs, 
including infinite grids.
Some grids were considered in previous papers.
In \cite{Axenovich:2003}, 
\cite{Puz2005.en}, and \cite{Kro:9colors}, 
perfect colorings (equitable partitions) 
of the infinite rectangular grid into two colors,
three colors, and up to nine colors, respectively, were described.
In \cite{Plotnikov:2005}, the parameter matrices of perfect colorings 
of the infinite hexagonal grid into three colors were listed,
but one was missed (it corresponds to a distance-regular coloring and occurs in the current classification).
As was proven in \cite{Puz2004.en} and \cite{Puz:2011en}, for every perfect coloring of the infinite rectangular, triangular, or hexagonal grid,
there exists a periodic perfect coloring with the same parameter matrix. 
In \cite{AVS:2012en} and \cite{Vas:Maltsev14}, 
the intersection arrays of completely regular codes in the infinite rectangular and, 
respectively, triangular grid were characterized.
In \cite{AvgVas:ACCT2012}, an $n$-dimensional rectangular grid were considered 
and it was proven that the maximum number of colors of an irreducible distance-regular coloring is $2n+1$.

In the current paper, we describe the completely regular codes in the hexagonal grid, 
which is an infinite edge-transitive graph of degree $3$.
The set of intersection arrays of the completely regular codes in this graph consists of several infinite series and several ``sporadic'' arrays,
see Theorem~1.
For every feasible intersection array, there are one, two, or infinitely many  nonequivalent completely regular codes.
In the appendices, all possible cases are illustrated by pictures.

 \emph{Remark}. Many codes in our characterization are very symmetrical. 
It is not a surprise that there are connections with some other classes of objects that are related with the symmetries of the plane.
For example, every edge-to-edge tiling (mosaic) of the plane by regular triangles and hexagons can be considered as a partition of the vertices of the infinite hexagonal grid
into two parts (a triangle corresponds to a node, while a hexagon covers six nodes). 
It is notable that if we consider so-called $1$- and $2$-uniform tilings by triangles and hexagons, see e.g. \cite[Sect.~2.2]{GruShe}
(there are $9$ such tilings, named 
$(3^6)$, 
$(6^3)$, 
$(3^4{\cdot} 6)$, 
$(3{\cdot} 6{\cdot} 3{\cdot} 6)$, 
$(3^6;3^4{\cdot} 6)_1$, 
$(3^6;3^4{\cdot} 6)_2$,   
$(3^6;3^2{\cdot} 6^2)$,  
$(3^4{\cdot} 6;3^2{\cdot} 6^2)$,  
$(3^2{\cdot} 6^2;3{\cdot} 6{\cdot} 3{\cdot} 6)$ in \cite[pp. 63--67]{GruShe}),
then, with only one exception $(3^6;\linebreak[2]3^4{\cdot} 6)_1$,
one of these two sets is always a completely regular code (the corresponding inter\-sec\-ti\-on arrays are 
$[3]$, 
$[3]$, 
$[21{-}111{-}30]$, 
$[03{-}12]$, 
% $[21{-}102{-}111{-}30]$, 
$[21{-}102{-}21]$, 
$[21{-}102{-}21]$, 
$[21{-}111{-}21]$, 
$[12{-}111{-}21]$, using the notation defined below, 
see also the corresponding figures in the appendices).
Also, the codes with arrays
$[21{-}102{-}12]$, 
$[21{-}201{-}12]$, 
$[21{-}102{-}111{-}201{-}30]$
correspond to $3$-uniform tilings with parameters 
$(3^4{\cdot}6; 3^6; 3^6)$, 
$(3{\cdot} 6{\cdot} 3{\cdot} 6; 3^4{\cdot}6; 3^6)$, 
$(3^4{\cdot}6; 3^6; 3^6)$, respectively, in terms of \cite{GruShe}.

\section{Definitions and the main result}\label{sec:main}
A vertex partition $(V_1, \ldots, V_k)$ of a graph $G$
is called a {\em perfect coloring} (equitable partition, regular partition, partition design)
if for every $i,j\in\{1,\ldots,k\}$ there is a number $a_{ij}$ such that
every vertex from  $V_i$ has exactly $a_{ij}$ neighbors from $V_j$.
The matrix $A = (a_{ij})$ will be called the {\em parameter matrix} of the coloring.
A perfect coloring $(V_1,\ldots, V_k)$ is called {\em distance regular} if its
parameter matrix is tridiagonal (equivalently, if the perfect coloring $(V_1,\ldots, V_k)$
is the distance coloring with respect to $V_1$; the set $V_1$ is known to be a {\em distance regular code} in this case).
We study the distance regular colorings of the infinite cubic graph of the hexagonal grid.

We define the hexagonal grid as the Cayley graph 
of the group generated by three elements $x$, $y$, and $z$ 
and defined by the identities $xx=yy=zz=xyzxyz=o$, where 
$o$ stands for the identity element. That is, 
the vertices (nodes) are the elements of the group; 
two elements $u$ and $v$ are adjacent if and only if one of $u=vx$, $u=vy$,  $u=vz$ holds.
In most of the figures, $x$ will correspond to the horizontal direction, $y$ to the right-diagonal, and $z$ to the left-diagonal direction:
$$\input{000} $$
In the other figures, the grid is pictured rotated by 90 degrees; in those cases, we will not refer to the nodes as elements of the group.

A tridiagonal parameter matrix $(a_{ij})_{i,j=1}^k$ of a distance regular coloring will be written 
in the form of the array
$$[a_{11}a_{12}{-}
a_{21}a_{22}a_{23}{-}
a_{32}a_{33}a_{34}{-}\ldots{-}
a_{k\,k{-}1}a_{kk}],$$
which is called the {\em intersection array} of the corresponding completely regular code (we use a non-standard but convenient way to list the elements of the array).

\begin{theorem}
Up to the central symmetry, the parameter $k\times k$ matrices of nontrivial ($k>1$) 
distance regular colorings of the infinite hexagonal grid are divided into:\\
\mbox{}\,\,\,\,$\bullet$\,\,%
   $5$ matrices $[03{-}30]$, $[03{-}12]$, $[12{-}21]$, $[12{-}12]$, $[21{-}12]$, $k=2$;\\
\mbox{}\,\,\,\,$\bullet$\,\,%
  $6$ infinite classes \\
\mbox{}\,\,\,\,\,\,\,%
  $[12{-}111{-}...{-}111{-}12]$, 
  $[12{-}111{-}...{-}111{-}21]$,  
  $[21{-}111{-}...{-}111{-}12]$,  $k=3,4,5,...$;\\
\mbox{}\,\,\,\,\,\,\,%
 $[12{-}201{-}102{-}201{-}...{-}201{-}12]$,  $k=3,5,7,...$; \\
\mbox{}\,\,\,\,\,\,\,%
 $[12{-}201{-}102{-}201{-}...{-}102{-}21]$,
 $[21{-}102{-}201{-}102{-}...{-}201{-}12]$, $k=4,6,8,...$;\\
\mbox{}\,\,\,\,$\bullet$\,\,%
 $10$ ``sporadic'' matrices
  $[03{-}102{-}30]$, $[03{-}111{-}12]$,
  $[12{-}102{-}12]$, $[21{-}102{-}12]$, \\ 
\mbox{}\,\,\,\,\,\,\,\,\,\,\,\,% 
  $[03{-}102{-}102{-}30]$, $[03{-}102{-}201{-}30]$,
  $[03{-}111{-}111{-}30]$, $[12{-}102{-}111{-}21]$, \\ 
\mbox{}\,\,\,\,\,\,\,\,\,\,\,\,%
  $[03{-}102{-}102{-}201{-}30]$, $[03{-}102{-}111{-}201{-}12]$.
 
Each of the matrices $[03{-}111{-}12]$, $[12{-}201{-}12]$ corresponds to two nonequivalent colorings; 
each of $[03{-}12]$, $[12{-}21]$, $[21{-}12]$, $[12{-}111{-}21]$, $[03{-}102{-}201{-}30]$, to infinite number of colorings;
every other matrix corresponds to one colorings, up to equivalence.
\end{theorem}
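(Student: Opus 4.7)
The plan is to reduce the classification to a finite local analysis around $V_1$ and then propagate the coloring outward to determine $V_2, V_3, \ldots, V_k$. Since the hexagonal grid is $3$-regular, the first row of any tridiagonal parameter matrix satisfies $a_{11}+a_{12}=3$, giving four cases $(a_{11},a_{12})\in\{(3,0),(2,1),(1,2),(0,3)\}$. The case $(3,0)$ is trivial. In the other three cases the induced subgraph on $V_1$ is, respectively, a disjoint union of cycles, a perfect matching of $V_1$, or empty (so $V_1$ is independent). Because the grid is bipartite with girth $6$, any cycle in $V_1$ has even length at least $6$. By the Puzynina result cited in the introduction, to enumerate feasible parameter matrices it suffices to search among periodic colorings, which keeps the first stage combinatorially finite.

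First, for each nontrivial choice of $(a_{11},a_{12})$ I would classify the admissible shapes of $V_1$ up to the symmetries of the grid. The requirement that every vertex of $V_1$ see the same number of neighbors inside $V_1$ is already quite restrictive: in the cases $a_{11}\in\{0,1\}$ the $6$-cycle structure forces one of a short list of translation-periodic patterns, giving rise to the small arrays $[03{-}\ldots]$ and $[12{-}\ldots]$ appearing in the theorem. In the case $a_{11}=2$ the cycles filling $V_1$ can be chosen with considerably more freedom (long hexagonal strips, zig-zags, brick-like tilings), which is precisely the source of the infinitely many nonequivalent colorings for arrays such as $[21{-}12]$ and $[12{-}111{-}21]$.

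Second, given a candidate $V_1$, the partition $V_i=\{v : d(v,V_1)=i\}$ is determined. I would then read off the intersection array layer by layer, checking that the tridiagonal regularity holds at every vertex and determining the smallest $k$ with $a_{k,k-1}+a_{kk}=3$ (the ``closing'' of the coloring). The six infinite families arise here from thin strip configurations whose distance layers produce predictable middle blocks of the form $111{-}\ldots{-}111$ or alternating $201{-}102{-}\ldots$; the ten sporadic matrices correspond to particular bounded configurations whose propagation closes after a bounded number of steps. This stage also settles, for each candidate $V_1$, whether the coloring is unique, has two representatives, or admits a continuum (the latter occurring only when the $a_{11}=2$ cycle choice has genuine freedom compatible with the regularity condition on all outer layers).

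The main obstacle is \emph{completeness}: ruling out every intersection array not in the listed families. This will require careful local book-keeping using the $6$-cycles of the grid. Any hypothetical additional parameter matrix must, by the forced propagation described above, lead to a vertex having the wrong number of neighbors in some $V_j$ within a bounded number of steps. I would organize the exclusion as a rooted case tree indexed by the first few rows of the intersection array, pruning each branch either by exhibiting a coloring already in the list or by deriving a local contradiction from the hexagonal geometry. Showing that all branches close at bounded depth, and that the infinite series really do continue for every $k$ in the stated ranges, is the technically heaviest step and the place where the specific geometry of the hexagonal grid does the decisive work.
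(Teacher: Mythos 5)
Your proposal is a plan of attack, not a proof: the entire mathematical content of this theorem lives in the case analysis that you explicitly defer (``the technically heaviest step''), namely the exhaustion of all array prefixes and the local propagation/contradiction arguments on hexagons. The paper's proof is exactly that exhaustion, organized lexicographically by the prefix $[a_0b_0{-}c_1a_1b_1{-}\ldots]$, supported by a forbidden-fragment lemma (no $[\ldots{-}111{-}102{-}\ldots]$ or $[\ldots{-}201{-}111{-}\ldots]$) and by a coset-shifting lemma used to manufacture and classify the continuum families. Nothing in your outline substitutes for this work, so as it stands there is no proof to assess for correctness of the classification itself.

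Beyond the incompleteness, two specific steps in your outline would fail. First, you propose to invoke Puzynina's periodicity theorem to ``keep the first stage combinatorially finite.'' That theorem only guarantees that each feasible parameter matrix is realized by \emph{some} periodic coloring; it cannot be used to enumerate the colorings themselves, and the second half of the theorem (one vs.\ two vs.\ infinitely many nonequivalent colorings per matrix) is precisely about non-periodic colorings --- the paper exhibits continuum many colorings for $[03{-}102{-}201{-}30]$, $[21{-}12]$, $[12{-}111{-}21]$, etc., obtained by independently shifting or inverting colors on infinitely many cosets, and almost all of these are aperiodic. Second, your claim that the infinite multiplicity occurs ``only when the $a_{11}=2$ cycle choice has genuine freedom'' is false: the arrays $[03{-}12]$ and $[03{-}102{-}201{-}30]$ have $a_{11}=0$ (so $V_1$ is an independent set) and nevertheless admit infinitely many nonequivalent colorings. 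Relatedly, your assertion that for $a_{11}\in\{0,1\}$ the regularity condition ``forces one of a short list of translation-periodic patterns'' for $V_1$ is contradicted by the same examples. So the organizing principle you propose for the uniqueness/multiplicity statement is not merely unproven but incorrect, and would have to be replaced by the kind of coset-by-coset freedom analysis the paper carries out in its Lemma~\ref{l:infin}.
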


All possible distance regular colorings of the infinite hexagonal grid are described in the proof of the theorem;
the summary can be found in the appendices.

\section{A proof of the theorem}\label{sec:proof}
We will consider arrays $[a_0b_0{-}c_1a_1b_1{-}c_2a_2b_2{-}...]$,
considering possible prefixes in the lexicographic order.
%The main cases are the following:\\
We divide the proof into the following cases:\\
\def\qwerty{}
\newcommand\zacherk[1]{\raisebox{0pt}[0pt][0pt]{\raisebox{0.40em}{\underline{\raisebox{0pt}[0pt][0pt]{\raisebox{-0.40em}{#1}}}}}}
\begin{minipage}[t]{1.5in}
 {\qwerty${[03{-}102{-}102{-}...]}$},\\
{\qwerty$[03{-}102{-}111{-}...]$},\\
\zacherk{$[03{-}102{-}12]$},\\
\zacherk{$[03{-}102{-}201{-}1...]$},\\
{\qwerty$[03{-}102{-}201{-}201{-}...]$},\\
\zacherk{$[03{-}102{-}201{-}21]$},\\
{\qwerty$[03{-}102{-}201{-}30]$},\\
\zacherk{${[03{-}102{-}21]}$},\\
{\qwerty${[03{-}102{-}30]}$},
\end{minipage}
\begin{minipage}[t]{1.2in}
{\qwerty${[03{-}111{-}...]}$},\\
{\qwerty${[03{-}12]}$},\\
{\qwerty${[03{-}201{-}...]}$},\\
\zacherk{${[03{-}21]}$},\\
{\qwerty${[03{-}30]}$},\\
{\qwerty$[12{-}102{-}...]$},\\
\zacherk{${[12{-}111{-}102{-}...]}$},\\
{\qwerty${[12{-}111{-}111{-}...]}$},\\
{\qwerty${[12{-}111{-}201{-}...]}$},\\
\end{minipage}
\begin{minipage}[t]{1.2in}
{\qwerty${[12{-}111{-}21]}$},\\
\zacherk{${[12{-}111{-}30]}$},\\
{\qwerty${[12{-}12]}$},\\
{\qwerty${[12{-}201{-}102{-}...]}$},\\
\zacherk{${[12{-}201{-}111{-}...]}$},\\
{\qwerty${[12{-}201{-}12]}$},\\
\zacherk{${[12{-}201{-}201{-}...]}$},\\
\zacherk{${[12{-}201{-}21]}$},\\
\zacherk{${[12{-}201{-}30]}$},\\
\end{minipage}
\begin{minipage}[t]{1.0in}
{\qwerty${[12{-}21]}$},\\
\zacherk{${[12{-}30]}$},\\
{\qwerty${[21{-}102{-}...]}$},\\
{\qwerty${[21{-}111{-}...]}$},\\
{\qwerty${[21{-}12]}$},\\
{\qwerty${[21{-}201{-}...]}$},\\
{\qwerty${[21{-}21]}$},\\
{\qwerty${[21{-}30]}$}.
\end{minipage}

We will consider the cases below, step by step, dividing into subcases if necessary.
Some of the cases will lead to a contradiction 
(the corresponding prefixes are strikeout in the table above); 
the other result in one or more colorings.
Each prefix from this list will occur once in bold, 
in the place where we start to consider the corresponding case.

Before we start to consider the cases, 
we describe the notation we use in the figures.
The colors are indicated by the numbers, starting from $0$. 
The subindices that accompany the color numbers 
indicate the ``argument steps'', i.e., 
the order in which the nodes are considered 
to check that the situation 
pictured in the figure takes place; 
the color of a cell with such subindex 
can be uniquely determined 
from the colors of the cells with smaller 
subindices by trivial arguments and, 
sometimes, assumption up to a symmetry.
To illustrate this notation, we consider the proof of the following lemma in details.

\begin{lemma}\label{l:111-102} The fragments $[...{-}111{-}102{-}...]$
and $[...{-}201{-}111{-}...]$ cannot occur in a feasible array.
\end{lemma}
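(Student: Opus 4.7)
The two fragments are related by the central symmetry of the intersection array (reversing row order and swapping the left and right off-diagonal entries of each row maps $[\ldots{-}111{-}102{-}\ldots]$ to $[\ldots{-}201{-}111{-}\ldots]$), so it suffices to rule out $[\ldots{-}111{-}102{-}\ldots]$. Assume for contradiction that a feasible array has consecutive rows $(c_i, a_i, b_i) = (1,1,1)$ and $(c_{i+1}, a_{i+1}, b_{i+1}) = (1,0,2)$.

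Pick $v \in V_i$; by the pattern $(1,1,1)$ it has three distinct neighbors $u \in V_{i-1}$, $v' \in V_i$, $w \in V_{i+1}$. Distance-regularity gives $v'$ the same pattern, so $v'$ has a $V_{i+1}$-neighbor $w'$ and a $V_{i-1}$-neighbor $u'$; the girth $6$ of the hexagonal grid rules out $w=w'$ and $u=u'$ (each would produce a triangle through $v,v'$). By the pattern $(1,0,2)$, $w$ has two $V_{i+2}$-neighbors $w_1, w_2$. The plan is to use the two hexagons through $vw$ to force the pattern at $V_{i+2}$, and then to produce a short cycle contradicting girth $6$.

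Consider the hexagon $H$ containing the edges $vv'$ and $vw$. The vertex of $H$ adjacent to $v'$ is either $u'$ or $w'$; the choice $u' \in V_{i-1}$ is impossible, because the next vertex of $H$ would have to be adjacent to both a $V_{i-1}$-vertex and a $V_{i+2}$-neighbor of $w$, leaving no valid distance class. Hence $H=(v,v',w',w'_1,w_1,w)$ with $w'_1 \in V_{i+2}$ a neighbor of $w'$, $w_1 \in V_{i+2}$ a neighbor of $w$, and $w'_1 \sim w_1$; this forces $a_{i+2} \ge 1$. The other hexagon through $vw$, using edges $vu$ and $vw$, is similarly identified as $(v,u,c,b,w_2,w)$ with $c \in V_i$ and $b \in V_{i+1}$; then $w_2$ has two distinct $V_{i+1}$-neighbors $w$ and $b$, so $c_{i+2} \ge 2$. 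With $c_{i+2}+a_{i+2}+b_{i+2}=3$, the pattern at $V_{i+2}$ is forced to be $(2,1,0)$.

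Applying $c_{i+2}=2$ to $w_1$ yields a second $V_{i+1}$-neighbor $w'' \ne w$. Finally, examine the hexagon through $ww_1$ and $w_1 w''$; a case analysis on its three remaining vertices, each tightly constrained by the forced patterns $(1,1,1),(1,0,2),(2,1,0)$ together with the grid's bipartiteness and girth, collapses in every admissible subcase to the coincidence $w''=b$. But then $w-w_1-b-w_2-w$ is a $4$-cycle, contradicting the fact that the hexagonal grid has girth $6$. The main technical obstacle is this final case analysis; following the conventions described in the preceding paragraphs, it is naturally recorded as a figure whose cells carry both a color and a subindex giving the order in which each cell's color becomes forced, and the figure makes the forced coincidence $w''=b$ transparent.
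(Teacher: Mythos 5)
Your setup is sound and runs parallel to the paper's own argument: the reduction of $[\ldots{-}201{-}111{-}\ldots]$ to $[\ldots{-}111{-}102{-}\ldots]$ by central symmetry, the identification of the two hexagons through $vw$, and the resulting forced pattern $(c_{i+2},a_{i+2},b_{i+2})=(2,1,0)$ all check out (the paper derives the same $c_4=2$, $a_4=1$ by its figure chase). The gap is in your final paragraph. The case analysis on the hexagon through $ww_1$ and $w_1w''$ does \emph{not} collapse to $w''=b$ in every admissible subcase: the subcase in which that hexagon reads $w,w_1,w'',f,b,w_2$, where $f\in V_{i+2}$ is the second $V_{i+2}$-neighbor of $b$, alternates $V_{i+1},V_{i+2},V_{i+1},V_{i+2},V_{i+1},V_{i+2}$ and is consistent with all three forced patterns $(1,1,1)$, $(1,0,2)$, $(2,1,0)$; no coincidence and no short cycle is produced there. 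Moreover, the contradiction that actually kills the configuration is not a girth violation.

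The step can be repaired, but it needs the uniqueness of the hexagon through a pair of adjacent edges rather than a pattern-only case analysis. The hexagon through $ww_1$ and $w_1w''$ must leave $w$ along $ww_2$ (it cannot use $wv$: the hexagon through $w_1w$ and $wv$ is $H$, whose vertex after $w_1$ is $w'_1\in V_{i+2}\neq w''$), and it must leave $w_2$ along its third edge $w_2e$ (it cannot use $w_2b$: the hexagon through $ww_2$ and $w_2b$ is $H'$, which does not contain $w_1$). By the pattern $(2,1,0)$, this $e$ is the unique $V_{i+2}$-neighbor of $w_2$, so the next vertex of the hexagon is one of the two $V_{i+1}$-neighbors of $e$; but that vertex is adjacent to $w''\in V_{i+1}$, contradicting $a_{i+1}=0$. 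This is exactly the paper's final step: its node marked ``?'' is forced to have color $3$ yet is adjacent to a color-$3$ node, violating $a_3=0$.
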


\begin{proof} By the example of $c_2=a_2=b_2=c_3=1$, $b_3=2$,
we see a contradiction:
$$\def\db#1{\dcolor{yellow!60!white}{$1_{\mathrm{#1}}$}}%
\def\0{++(1,0)}%
\def\uc#1{\ucolor{orange!65!white}{$2_{\mathrm{#1}}$}}%
\def\dc#1{\dcolor{orange!65!white}{$2_{\mathrm{#1}}$}}%
\def\ud#1{\ucolor{red!85!brown!70!white}{$3_{\mathrm{#1}}$}}%
\def\dd#1{\dcolor{red!85!brown!70!white}{$3_{\mathrm{#1}}$}}%
\def\ue#1{\ucolor{brown!80!black}{$4_{\mathrm{#1}}$}}%
\def\de#1{\dcolor{brown!80!black}{$4_{\mathrm{#1}}$}}%
\begin{tikzpicture}[scale=0.65, rotate=60]
\draw (0, 3.464)\0\0  \db1\uc3\dd3;%\ue.;
\draw (0, 1.732)\0\dc0\uc0\dd1\ue2\de5;
\draw (0,-0.000)\0\ud3\de4\ue2\dd5\uq;
\end{tikzpicture}$$

As an example illustrating the notation,
let us describe the steps shown at the figure in details.

Step 0. As $a_2=1$, there are two neighbor nodes of color $2$. 
We consider such two nodes and marked them by ``$2_0$'', 
where the subindex $0$ indicates the step number.

$$
\raisebox{-10mm}{\def\uo{\ucolor{white}{$\ $}}
\def\do{\dcolor{white}{$\ $}}
\def\0{++(1,0)}%
\def\uc#1{\ucolor{orange!65!white}{$2_{\mathrm{#1}}$}}%
\def\dc#1{\dcolor{orange!65!white}{$2_{\mathrm{#1}}$}}%
\begin{tikzpicture}[scale=0.65, rotate=60]
\draw (0, 3.464)\0\0  \do \uo\do; 
\draw (0, 1.732)\0\dc0\uc0\do\uo\do ;
\draw (0,-0.000)\0\uo \do\uo\do\uo;
\end{tikzpicture}}
\stackrel{\mathrm{step\ 1}}\rightsquigarrow
\raisebox{-10mm}{\def\uo{\ucolor{white}{$\ $}}
\def\do{\dcolor{white}{$\ $}}
\def\db#1{\dcolor{yellow!60!white}{$1_{\mathrm{#1}}$}}%
\def\0{++(1,0)}%
\def\uc#1{\ucolor{orange!65!white}{$2_{\mathrm{#1}}$}}%
\def\dc#1{\dcolor{orange!65!white}{$2_{\mathrm{#1}}$}}%
\def\dd#1{\dcolor{red!85!brown!70!white}{$3_{\mathrm{#1}}$}}%
\begin{tikzpicture}[scale=0.65, rotate=60]
\draw (0, 3.464)\0\0  \db1\uo\do;
\draw (0, 1.732)\0\dc0\uc0\dd1\uo\do;
\draw (0,-0.000)\0\uo\do\uo\do\uo;
\end{tikzpicture}}
\stackrel{\mathrm{step\ 2}}\longrightarrow
\raisebox{-10mm}{\def\uo{\ucolor{white}{$\ $}}
\def\do{\dcolor{white}{$\ $}}
\def\db#1{\dcolor{yellow!60!white}{$1_{\mathrm{#1}}$}}%
\def\0{++(1,0)}%
\def\uc#1{\ucolor{orange!65!white}{$2_{\mathrm{#1}}$}}%
\def\dc#1{\dcolor{orange!65!white}{$2_{\mathrm{#1}}$}}%
\def\ud#1{\ucolor{red!85!brown!70!white}{$3_{\mathrm{#1}}$}}%
\def\dd#1{\dcolor{red!85!brown!70!white}{$3_{\mathrm{#1}}$}}%
\def\ue#1{\ucolor{brown!80!black}{$4_{\mathrm{#1}}$}}%
\def\de#1{\dcolor{brown!80!black}{$4_{\mathrm{#1}}$}}%
\begin{tikzpicture}[scale=0.65, rotate=60]
\draw (0, 3.464)\0\0  \db1{\ucolor{white}{$\boldsymbol b$}}{\dcolor{white}{$\boldsymbol c$}};
\draw (0, 1.732)\0\dc0\uc0\dd1\ue2\do;
\draw (0,-0.000)\0{\ucolor{white}{$\boldsymbol a$}}\do\ue2\do\uo;
\end{tikzpicture}}
$$

Step 1. Now consider one of these two color-$2$ nodes, e.g., the upper one. 
Since $c_2=b_2=1$, its two unmarked neighbors have colors $1$ and $3$.
Without loss of generality (up to a symmetry), 
we mark the left neighbor by ``$1_1$'' and  the right neighbor by ``$3_1$'',
where the subindex $1$ indicates the step number.

Step 2. Since $b_3=2$, the node marked by ``$3_1$'' has two color-$4$ neighbors.
We mark them by ``$4_2$'',
where the subindex $2$ indicates the step number.

Step 3. Now consider the node {$\boldsymbol a$} (see the last figure).
It has color $1$ or $3$. But $1$ is not possible because {$\boldsymbol a$}
is at distance $2$ from a color-$4$ node.
So, we mark the node {$\boldsymbol a$} by ``$3_3$'',
where the subindex $3$ indicates the step number.

Independently, we consider the nodes {$\boldsymbol b$} and {$\boldsymbol c$}.
They lay on a length-$3$ path from a color-$1$ node to a color-$4$ node;
so, they must have colors $2$ and $3$, respectively. 
We mark them by these numbers 
with the subindex $3$, indicating the step number.

$$\stackrel{\mathrm{step\ 3}}\longrightarrow
\raisebox{-10mm}{\def\uo{\ucolor{white}{$\ $}}
\def\do{\dcolor{white}{$\ $}}
\def\db#1{\dcolor{yellow!60!white}{$1_{\mathrm{#1}}$}}%
\def\0{++(1,0)}%
\def\uc#1{\ucolor{orange!65!white}{$2_{\mathrm{#1}}$}}%
\def\dc#1{\dcolor{orange!65!white}{$2_{\mathrm{#1}}$}}%
\def\ud#1{\ucolor{red!85!brown!70!white}{$3_{\mathrm{#1}}$}}%
\def\dd#1{\dcolor{red!85!brown!70!white}{$3_{\mathrm{#1}}$}}%
\def\ue#1{\ucolor{brown!80!black}{$4_{\mathrm{#1}}$}}%
\def\de#1{\dcolor{brown!80!black}{$4_{\mathrm{#1}}$}}%
\begin{tikzpicture}[scale=0.65, rotate=60]
\draw (0, 3.464)\0\0  \db1\uc3\dd3;%\ue.;
\draw (0, 1.732)\0\dc0\uc0\dd1\ue2\do;
\draw (0,-0.000)\0\ud3{\dcolor{white}{$\boldsymbol d$}}\ue2\do\uo;
\end{tikzpicture}}
\stackrel{\mathrm{step\ 4}}\longrightarrow
\raisebox{-10mm}{\def\uo{\ucolor{white}{$\ $}}
\def\do{\dcolor{white}{$\ $}}
\def\db#1{\dcolor{yellow!60!white}{$1_{\mathrm{#1}}$}}%
\def\0{++(1,0)}%
\def\uc#1{\ucolor{orange!65!white}{$2_{\mathrm{#1}}$}}%
\def\dc#1{\dcolor{orange!65!white}{$2_{\mathrm{#1}}$}}%
\def\ud#1{\ucolor{red!85!brown!70!white}{$3_{\mathrm{#1}}$}}%
\def\dd#1{\dcolor{red!85!brown!70!white}{$3_{\mathrm{#1}}$}}%
\def\ue#1{\ucolor{brown!80!black}{$4_{\mathrm{#1}}$}}%
\def\de#1{\dcolor{brown!80!black}{$4_{\mathrm{#1}}$}}%
\begin{tikzpicture}[scale=0.65, rotate=60]
\draw (0, 3.464)\0\0  \db1\uc3\dd3;%\ue.;
\draw (0, 1.732)\0\dc0\uc0\dd1\ue2\do;
\draw (0,-0.000)\0\ud3\de4\ue2\do\uo;
\end{tikzpicture}}
\stackrel{\mathrm{step\ 5}}\longrightarrow
\raisebox{-10mm}{}
\longrightarrow
\raisebox{3mm}{%
\begin{tabular}{c}
 \mbox{con-}\\ \mbox{tra-}\\ \mbox{dic-}\\ \mbox{tion}\\ 
\end{tabular}}
$$

Step 4. At this step, we mark the node {$\boldsymbol d$} by ``$4_4$'' since it has a color-$3$ neighbor and
$b_3=2$.

Step 5. After the previous step, 
we see that a color-$4$ node has two color-$3$ neighbors. 
Another color-$4$ node has a color-$4$ neighbor. 
By the definition of a perfect coloring, both assertions must be satisfied by any 
color-$4$ node (that is, $c_4 = 2$ and $a_4=1$). 
Using this fact, we mark two nodes by ``$3_5$'' and ``$4_5$'', 
as shown at the figure.

Now, both unmarked neighbors of the node ``$4_5$'' must be colored by $3$.
But the one indicated by ``?'' cannot have this color because it has a color-$3$ neighbor and $a_3=0$.
A contradiction.
\end{proof}

We start with {\boldmath${[03{-}102{-}102{-}...]}$}.
We assume that the origin is colored by $0$.
As we see from the figure below, the color
of any node at distance at most $3$ from the origin
is uniquely defined. 

$$\def\0{++(1,0)}%
\def\uo{\ucolor{white}{$\ $}}%
\def\do{\dcolor{white}{$\ $}}%
\def\dxa{\dcolor{white}{$\boldsymbol a$}}%
\def\uxa{\ucolor{white}{$\boldsymbol a$}}%
\def\dxb{\dcolor{white}{$\boldsymbol b$}}%
\def\uxb{\ucolor{white}{$\boldsymbol b$}}%
\def\ua#1{\ucolor{yellow!20!white}{$0_{\mathrm #1}$}}%
\def\da#1{\dcolor{yellow!20!white}{$0_{\mathrm #1}$}}%
\def\ub#1{\ucolor{yellow!60!white}{$1_{\mathrm #1}$}}%
\def\db#1{\dcolor{yellow!60!white}{$1_{\mathrm #1}$}}%
\def\uc#1{\ucolor{orange!65!white}{$2_{\mathrm #1}$}}%
\def\dc#1{\dcolor{orange!65!white}{$2_{\mathrm #1}$}}%
\def\ud#1{\ucolor{red!85!brown!70!white}{$3_{\mathrm #1}$}}%
\def\dd#1{\dcolor{red!85!brown!70!white}{$3_{\mathrm #1}$}}%
\def\ue#1{\ucolor{brown!80!black}{$4_{\mathrm #1}$}}%
\def\de#1{\dcolor{brown!80!black}{$4_{\mathrm #1}$}}%
\def\ut{ ++(1, 0.29) node  {\raisebox{-0.25em}[0.2em][0em]{\makebox[0mm][c]{$\cdots$}}}  ++(0,-0.29) }%
\def\dt{ ++(1,-0.29) node  {\raisebox{-0.25em}[0.2em][0em]{\makebox[0mm][c]{$\cdots$}}}  ++(0, 0.29) }%
\begin{tikzpicture}[scale=0.65, rotate=-90]
\draw (0,-0.000)\do \uo \dd3\uxa\dd3\uo \do ;
\draw (0,-1.732)\uo \dd3\uc2\db1\uc2\dd3\uo ;
\draw (0,-3.464)\dd3\uc2\db1\ua0\db1\uc2\dd3;
\draw (0,-5.192)\uo \dd3\uc2\dd3\uc2\dd3\uo ;
\end{tikzpicture}$$
The node $\boldsymbol a$ can be colored by either $2$ or $4$.
Each of the cases leads to a unique coloring, up to a symmetry: 
$$\input{03-102-102a}$$
(a period in the subindex denotes that the color of the corresponding cell was determined in the previous figure).
The corresponding arrays are $[03{-}102{-}102{-}30]$ 
and $[03{-}102{-}102{-}201{-}30]$, respectively.

The next case is {\boldmath$[03{-}102{-}111{-}...]$}.
$$%
\begin{tikzpicture}[scale=0.65, rotate=-90]
%\draw (0,-1.732);
\draw (0,-3.464)\uo \dc4\uxa\dc4\uo ;
\draw (0,-5.192)\dd3\uc2\db1\uc2\dd3;
\draw (0,-6.928)\uc2\db1\ua0\db1\uc2;
%\draw (0,-8.660);
\end{tikzpicture}$$
The node $\boldsymbol a$ can be colored by either $3$ or $1$.
The first case leads to a contradiction; in the second case, we derive that the node $xyxzx$ has color $0$:
$$
\begin{tikzpicture}[scale=0.65, rotate=-90]
\draw (0,-0.000)\do\uo \do \uo \do \uo \do;
\draw (0,-1.732)\uo\da2\ub3\dq \ub3\da2\uo;
\draw (0,-3.464)\do\ub1\dc.\ud0\dc.\ub1\do;
\draw (0,-5.192)\uo\dd.\uc.\db.\uc.\dd.\uo;
\draw (0,-6.928)\do\uc.\db.\ua.\db.\uc.\do;
%\draw (0,-8.660);
\end{tikzpicture}
\qquad
\begin{tikzpicture}[scale=0.65, rotate=-90]
\draw (0,-0.000)\do \uo \do \uo \do \uo \do ;
\draw (0,-1.732)\uo \dc2\ub2\da1\ub2\dc2\uo ;
\draw (0,-3.464)\de3\ud1\dc.\ub0\dc.\ud1\de3;
\draw (0,-5.192)\ue3\dd.\uc.\db.\uc.\dd.\ue3;
\draw (0,-6.928)\do \uc.\db.\ua.\db.\uc.\do ;
%\draw (0,-8.660);
\end{tikzpicture}
$$
Similarly, the nodes $yxyzy$ and $zxzyz$ have the same color and,
for every node $a$ of color $0$, the nodes $axyxzx$, $ayxyzy$, and $azxzyz$
have color $0$ too. Following the arguments as at the picture above, we see that
the nodes of the other colors are colored uniquely.
We get a unique coloring; the array is $[03{-}102{-}111{-}201{-}12]$:
$$\input{03-102-111c}$$

The cases {\boldmath$[03{-}102{-}12]$} and {\boldmath$[03{-}102{-}201{-}1...]$} lead to contradictions:
$$%
\begin{tikzpicture}[scale=0.65, rotate=-30]
\draw (3,-3.464)            \db4\uc5\dc6\uq \db9;
\draw (0,-5.192)\do \uc3\dc2\uc3\dc2\uc3\db7\ua8;
\draw (0,-6.928)\uo \dc2\ub1\da0\ub1\dc2\uo ;
\end{tikzpicture}
 \qquad %
\begin{tikzpicture}[scale=0.65, rotate=-30]
\draw (1,-3.464)    \da4\ub5\dq;
\draw (0,-5.192)\dc2\ub3\dc2\ud4;
\draw (0,-6.928)\ub1\da0\ub1;
\end{tikzpicture}
$$
(the last subcase starts the series of four subcases with prefix $[03{-}102{-}201{-}...]$).

In the case {\boldmath$[03{-}102{-}201{-}201{-}...]$}, we consider two subcases, 
in accordance with the color of the node $\boldsymbol a$ at the picture below.
$$\def\uua{\ucolor{white}{\boldmath $a$}}%
\def\da#1{\dcolor{yellow!20!white}{$0_#1$}}%
\def\ub#1{\ucolor{yellow!60!white}{$1_#1$}}%
\def\dc#1{\dcolor{orange!65!white}{$2_#1$}}%
\begin{tikzpicture}[scale=0.65, rotate=-90]
\draw (0,-5.192) \dc2\uua\dc2;
\draw (0,-6.928) \ub1\da0\ub1;
\end{tikzpicture} \qquad \def\do{\dcolor{white}{$\ $}}%
\def\ua#1{\ucolor{yellow!20!white}{$0_#1$}}%
\def\da#1{\dcolor{yellow!20!white}{$0_#1$}}%
\def\ub#1{\ucolor{yellow!60!white}{$1_#1$}}%
\def\db#1{\dcolor{yellow!60!white}{$1_#1$}}%
\def\uc#1{\ucolor{orange!65!white}{$2_#1$}}%
\def\dc#1{\dcolor{orange!65!white}{$2_#1$}}%
\def\ud#1{\ucolor{red!85!brown!70!white}{$3_#1$}}%
\def\de#1{\dcolor{brown!80!black}{$4_#1$}}%
\begin{tikzpicture}[scale=0.65, rotate=-90]
\draw (0,-3.464)\dc2\ud2\de1\ud2\dc2;
\draw (0,-5.192)\ub1\dc.\ud0\dc.\ub1;
\draw (0,-6.928)\do \ub.\da.\ub.\do ;
\end{tikzpicture} \qquad \def\do{\dcolor{white}{$\ $}}%
\def\ua#1{\ucolor{yellow!20!white}{$0_#1$}}%
\def\da#1{\dcolor{yellow!20!white}{$0_#1$}}%
\def\ub#1{\ucolor{yellow!60!white}{$1_#1$}}%
\def\db#1{\dcolor{yellow!60!white}{$1_#1$}}%
\def\uc#1{\ucolor{orange!65!white}{$2_#1$}}%
\def\dc#1{\dcolor{orange!65!white}{$2_#1$}}%
\def\ud#1{\ucolor{red!85!brown!70!white}{$3_#1$}}%
\begin{tikzpicture}[scale=0.65, rotate=-90]
\draw (0,-3.464)\dc2\ub2\da1;
\draw (0,-5.192)\ud1\dc.\ub0\dc.\ub1;
\draw (0,-6.928)\do \ub.\da.\ub.\do ;
\end{tikzpicture}$$
We see that if $\boldsymbol a$ is colored by $3$ (that is, there is a hexagon with nodes of colors $0$ and $3$), 
then we obtain the array $[03{-}102{-}201{-}201{-}30]$,
which is symmetrical to $[03{-}102{-}102{-}201{-}30]$, considered above. 
If $\boldsymbol a$ is colored by $1$, then the situation of the first case takes place too, 
as we find another hexagon with nodes of both colors $0$ and $3$.

The case {\boldmath$[03{-}102{-}201{-}21]$} leads to a contradiction. $\def\db#1{\dcolor{yellow!60!white}{$1_#1$}}%
\def\uc#1{\ucolor{orange!65!white}{$2_#1$}}%
\def\dc#1{\dcolor{orange!65!white}{$2_#1$}}%
\def\ud#1{\ucolor{red!85!brown!70!white}{$3_#1$}}%
\def\dd#1{\dcolor{red!85!brown!70!white}{$3_#1$}}%
\begin{tikzpicture}[scale=0.65, rotate=-90]
\draw (0,-5.192) \dc2\uq \db2;
\draw (0,-6.928) \ud1\dd0\uc1;
\end{tikzpicture}$

Any coloring with array {\boldmath$[03{-}102{-}201{-}30]$} 
corresponds to a partition of the node set into balls of radius $1$ colored
as $\def\0{++(1,0)}%
\def\da{\dcolor{yellow!35!white}{$0$}}%
\def\ub{\ucolor{yellow!60!white}{$1$}}%
\begin{tikzpicture}[scale=0.50, rotate=-90]
\dk=\dktmp
\draw (0,-0    )\ub\da\ub;
\draw (0,-1.732)\0\ub\0;
\end{tikzpicture}
$ and $\def\0{++(1,0)}%
\def\ud{\ucolor{red!35!brown!80!white}{$3$}}%
\def\dc{\dcolor{orange!65!white}{$2$}}%
\begin{tikzpicture}[scale=0.50, rotate=-90]
\dk=\dktmp
\draw (0,-0    )\0 \dc;
\draw (0,-1.732)\dc\ud\dc;
\end{tikzpicture}
$. 
Moreover, the nodes at distance $1$ from a ball of first type belong to balls of second type, and vice versa.
The considered array is the first one for which the number of colorings is infinite. 
We will use the following lemma.
\begin{lemma}\label{l:infin}
Assume that we have a perfect coloring $\phi$ of the hexagonal grid such that
the set $X=\{(yz)^i, (yz)^ix \mid i=0,\pm 1,\pm 2,\ldots\}$ of nodes is colored periodically with period $yzyz$;
that is, for every node $v$ from $X$ the colors of $v$ and $yzyzv$ coincide. Then

{\rm (1)} All the coloring is periodic with period $yzyz$ 
{\rm (}i.e., for every node $v$ of the grid the colors of $v$ and $yzyzv$ coincide{\rm )}.

{\rm (2)} In each coset $uX$ of $X$, the coloring can be shifted by $yz$ resulting in a new perfect coloring 
$$ \psi(v) = \left\{ \begin{array}{ll}
\phi(yzv) & \mbox{if $v \in uX$}\\
\phi(v) & \mbox{if $v \not\in uX$}
\end{array}
\right.
$$
with the same parameter matrix. 
In particular, if $\phi$ is not periodic with period $yz$, 
then the number of perfect colorings with the same parameter matrix is infinite.

{\rm (3)}  Every perfect coloring that coincides with $\phi$ on $X$ and has the same parameters is obtained
from $\phi$ by shifting described in {\rm (2)}  on some, may be infinite, number of cosets of $X$.
\end{lemma}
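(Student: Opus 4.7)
The plan is to use two facts. First, for any group element $g$, left multiplication $L_g\colon v\mapsto gv$ is an automorphism of the Cayley graph, so $\phi\circ L_g$ is a perfect coloring with the same parameter matrix as $\phi$; in particular $L_{yz}$ maps $X$ onto $X$. Second, for the parameter matrix $[03{-}102{-}201{-}30]$ under consideration, the radius-$1$ ball partition described just before the lemma provides strong local rigidity: each vertex lies in a unique ball, and the coloring of a ball is determined by the color of its center.

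For part~(1), set $\phi'=\phi\circ L_{yzyz}$. Then $\phi'$ is a perfect coloring with the same parameter matrix, and by hypothesis $\phi'=\phi$ on $X$. To promote this to a global equality, I would prove an extension claim: any perfect coloring with the given matrix is determined by its restriction to $X$. The argument goes by induction on the distance to $X$, using at each step that knowing the color of a vertex together with the color of its in-$X$ neighbor narrows the placement of its containing ball center enough (via a short case analysis on ball type and on whether the vertex is a center or a leaf) to force the colors of the out-of-$X$ neighbors.

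For part~(2), I would verify directly that $\psi$ has the correct multiset of neighbor colors at every vertex. If the closed neighborhood of $v$ lies entirely inside $uX$ or entirely outside $uX$, the check reduces to the corresponding statement for $\phi$. The substantive cases are the boundary ones. A direct analysis in the Cayley graph shows that if $v\notin uX$ is adjacent to $uX$, then $v$ has exactly two neighbors in $uX$ and those two neighbors differ by $yz$ along the $X$-parametrisation; the shift $\psi(w)=\phi(yz\,w)$ therefore exchanges the $\phi$-colors of that pair, with the period $yzyz$ from part~(1) closing the loop at the outer end. Symmetrically, if $v\in uX$, matching the multiset at $v$ reduces to an identity of the form $\phi((yz)^{i-1}y)=\phi((yz)^{i+1}y)$, which is again the global period-$yzyz$ statement. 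The main obstacle is exactly this bookkeeping: one must verify that \emph{every} boundary configuration pairs up correctly under the $yz$-shift, so that shifting one coset never upsets the neighbor constraints seen from outside.

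For part~(3), let $\chi$ be any perfect coloring with the same parameter matrix that agrees with $\phi$ on $X$. Applying the extension claim of part~(1) to $\chi$ coset by coset shows that the restriction of $\chi$ to each $uX$ is determined up to the $yz$-shift identified in part~(2); the boundary analysis there implies these shifts on different cosets are mutually independent. Hence every such $\chi$ is obtained from $\phi$ by shifting on some (possibly infinite) subset of cosets, which gives the structure statement of~(3) and forces infinitely many perfect colorings with the same parameter matrix whenever $\phi$ is not itself $yz$-periodic, as claimed at the end of~(2).
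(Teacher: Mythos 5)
Your proposal has two genuine gaps. First, the ``extension claim'' on which you base parts (1) and (3) --- that a perfect coloring with the given matrix is determined by its restriction to $X$ --- is false, and is in fact contradicted by parts (2) and (3) of the lemma itself: whenever $\phi$ is not $yz$-periodic, shifting the coloring on a coset $uX\neq X$ produces a perfect coloring with the same parameters that agrees with $\phi$ on $X$ but differs elsewhere. What the paper propagates outward from $X$ by induction on distance is not the coloring but only its \emph{$yzyz$-periodicity}: two vertices of $X$ bearing the same color must have identical multisets of neighbor colors, which forces the layer at distance $1$ from $X$ to be $yzyz$-periodic (up to a phase that remains undetermined), and so on layer by layer. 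That weaker invariant is exactly what survives the coset-shift ambiguity and is all that (1) asserts. Note also that the lemma is stated for an \emph{arbitrary} perfect coloring of the grid satisfying the periodicity hypothesis on $X$; your appeal to the radius-$1$ ball structure of $[03{-}102{-}201{-}30]$ ties the argument to one parameter matrix, whereas the lemma is the paper's uniform mechanism for describing several infinite families of colorings.

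Second, the concluding clause of (2) --- infinitely many colorings when $\phi$ is not $yz$-periodic --- does not follow merely from the existence of one coset on which $\phi$ fails to be $yz$-periodic: shifting a coset on which the coloring is already $yz$-periodic changes nothing, so a single ``bad'' coset yields only two colorings. One must prove that \emph{infinitely many} cosets are not $yz$-periodically colored. The paper does this by contradiction: if only finitely many were, then by pigeonhole two identically colored, $yz$-periodic cosets $(xy)^{j}X$ and $(xy)^{j'}X$ exist far out, and comparing neighborhoods propagates their common periodic pattern back coset by coset until the whole coloring is $yz$-periodic, contrary to assumption. Your proposal contains no substitute for this argument. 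On the positive side, your direct verification in part (2) that a shifted coset still satisfies the perfect-coloring condition at boundary vertices (each outside neighbor sees two consecutive chain vertices whose colors are merely swapped) is sound and is essentially the paper's own check.
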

\begin{proof}
(1) Assume that the set $X$ is colored by the colors $\boldsymbol a$, $\boldsymbol b$, $\boldsymbol c$, $\boldsymbol d$, 
as in the figure. 
$$\def\uo{\ucolor{white}{$\ $}}%
\def\do{\dcolor{white}{$\ $}}%
\def\dxa{\dcolor{white}{$\boldsymbol a$}}%
\def\dxb{\dcolor{white}{$\boldsymbol b$}}%
\def\uxc{\ucolor{white}{$\boldsymbol c$}}%
\def\uxd{\ucolor{white}{$\boldsymbol d$}}%
\def\dxe{\dcolor{white}{$\boldsymbol e$}}%
\def\dxf{\dcolor{white}{$\boldsymbol f$}}%
\def\dxg{\dcolor{white}{$\boldsymbol g$}}%
\begin{tikzpicture}[scale=0.65, rotate=-90]
\draw (0,-3.464)\dxa\uo \dxb\uo \dxa\uo \dxb\uo \dxa\uo \dxb;%\uo \dxa\uo \dxb\uo ;
\draw (0,-5.192)\uxc\do \uxd\dxe\uxc\dxf\uxd\dxg\uxc\do \uxd;%\do \uxc\do \uxd\do ;
\draw (0,-6.928);
\end{tikzpicture}
$$
In the figure, there are two vertices of color $\boldsymbol c$ that have neighbors of colors 
$\boldsymbol a$, $\boldsymbol e$, $\boldsymbol f$
and $\boldsymbol a$, $\boldsymbol g$, respectively.
We conclude that 
$\boldsymbol g \in \{\boldsymbol e,\boldsymbol f\}$. Similarly, 
$\boldsymbol e \in \{\boldsymbol f,\boldsymbol g\}$.
It follows that $\boldsymbol g =\boldsymbol e$. 
Similarly, all nodes at distance $1$ from $X$ are colored periodically with period $yzyz$.
Straightforwardly, the same is true 
for the nodes at distance $2$ from $X$.
Proceeding by induction on the distance from $X$,
we find that the coloring is periodic 
 with period $yzyz$.
 
(2) It is straightforward that after shifting the colors of the nodes of $uX$, 
the resulting coloring still satisfies the definition of a perfect coloring:
$$\def\uo{\ucolor{white}{$\ $}}%
\def\do{\dcolor{white}{$\ $}}%
\def\dxa{\Dcolor{white}{$\boldsymbol a$}}%
\def\dxb{\Dcolor{white}{$\boldsymbol b$}}%
\def\uxc{\Ucolor{white}{$\boldsymbol c$}}%
\def\uxd{\Ucolor{white}{$\boldsymbol d$}}%
\def\dxe{\dcolor{white}{$\boldsymbol e$}}%
\def\dxf{\dcolor{white}{$\boldsymbol f$}}%
\def\uxg{\ucolor{white}{$\boldsymbol g$}}%
\def\uxh{\ucolor{white}{$\boldsymbol h$}}%
\def\uxi{\ucolor{white}{$\boldsymbol i$}}%
\def\uxj{\ucolor{white}{$\boldsymbol j$}}%
\def\dxk{\dcolor{white}{$\boldsymbol k$}}%
\def\dxl{\dcolor{white}{$\boldsymbol l$}}%
\begin{tikzpicture}[scale=0.65, rotate=-90]
\draw (0,-1.732)\uo \dxl\uo \dxk\uo \dxl\uo \dxk;
\draw (0,-3.464)\dxa\uxj\dxb\uxi\dxa\uxj\dxb\uxi;
\draw (0,-5.192)\uxc\dxf\uxd\dxe\uxc\dxf\uxd\dxe;
\draw (0,-6.928)\do \uxg\do \uxh\do \uxg\do \uxh;
\end{tikzpicture}
\def\dxa{\Dcolor{white}{$\boldsymbol b$}}%
\def\dxb{\Dcolor{white}{$\boldsymbol a$}}%
\def\uxc{\Ucolor{white}{$\boldsymbol d$}}%
\def\uxd{\Ucolor{white}{$\boldsymbol c$}}%
\qquad
\begin{tikzpicture}[scale=0.65, rotate=-90]
\draw (0,-1.732)\uo \dxl\uo \dxk\uo \dxl\uo \dxk;
\draw (0,-3.464)\dxa\uxj\dxb\uxi\dxa\uxj\dxb\uxi;
\draw (0,-5.192)\uxc\dxf\uxd\dxe\uxc\dxf\uxd\dxe;
\draw (0,-6.928)\do \uxg\do \uxh\do \uxg\do \uxh;
\end{tikzpicture}$$
If $zy$ is not a period of the coloring, then such shifts give different colorings. 
The number of the cosets $(xy)^iX$, $i=0,\pm 1, \pm 2, \ldots$,
of $X$ is infinite.
To show that the number of colorings is continuum, 
it is sufficient to prove that the following:

(*) \emph{The number of cosets of $X$ that are not colored periodically
with the period $zy$ is infinite.} 
Seeking a contradiction, assume the contrary, i.e., that there exists 
$l$ such that for all $j\ge l$ the cosets $(x y)^jX$ are colored periodically
with the period $zy$.
Since the number of colors is finite 
and the number of cosets is infinite, 
there are two cosets $(x y)^j X$ and $(x y)^{j'}X$, $j'>j\ge l$, colored identically,
%(i.e., the color of $v$ from $(x y)^j X$ coincides with the color of $(xy)^{j'-j}v$ from $(xy)^{j'}X$),
i.e., for some $a$ and $b$ and for all $i=0,\pm 1,\ldots$  we have 
$\phi((x y)^j (yz)^i)= \phi((x y)^{j'} (yz)^i)= a$ 
and $\phi((x y)^j (yz)^i x)= \phi((x y)^{j'} (yz)^i x)= b$.
Since $j'>l$, the coset $(x y)^{j'-1}X$ 
is colored periodically with the period $zy$,
that is, for some $c$ and $d$ and for all $i=0,\pm 1,\ldots$ we have 
$\phi((x y)^{j'-1} (yz)^i)= c$ and $\phi((x y)^{j'-1} (yz)^i x)= d$.
Now we see that every $a$-node from  $(x y)^{j'}X$ 
has two $d$-neighbors and, additionally, one $b$-neighbor.
Since the coloring is perfect, 
the same is true for all $a$-nodes from  $(x y)^{j}X$.
We conclude that $\phi((x y)^{j-1} (yz)^i x)= d$.
Considering the neighborhood of the $d$-nodes, 
we also see that $\phi((x y)^{j-1} (yz)^i x)= c$.
So, the cosets $(x y)^{j-1} X$ and $(x y)^{j'-1}X$ 
are colored identically and periodically 
with period $zy$.
By induction, 
the same is true 
for the cosets $(x y)^{j-s} X$ 
and $(x y)^{j'-s}X$, $s=0,1,2,\ldots$.
Consequently, all the coloring is periodic with  period $zy$, which contradicts our assumption.

(3) is straightforward.
\end{proof}

In the case $[03{-}102{-}201{-}30]$, we consider two subcases:
(1) every hexagon that has a node of color $0$ has a node of color $3$, and vice versa;
(2) there is a hexagon that has only one node of color $0$ or $3$; 
without loss of generality we assume that $o$ has color $0$ while $yxz$ has color $1$.
In both cases, the set $X=\{(yz)^i, (yz)^ix \mid i=0,\pm 1,\pm 2,\ldots\}$ is uniquely colored with period $yzyz$:
$$\mbox{(1)}%
\def\Uc#1{\Ucolor{orange!65!white}{$2_{\mathrm #1}$}}%
\def\Ua#1{\Ucolor{yellow!20!white}{$0_{\mathrm #1}$}}%
\def\Db#1{\Dcolor{yellow!60!white}{$1_{\mathrm #1}$}}%
\def\Dd#1{\Dcolor{red!85!brown!70!white}{$3_{\mathrm #1}$}}%
\begin{tikzpicture}[scale=0.65, rotate=90]
\draw (0, 0    ) \Dd1\uc1\Db0\uc0\Dd0\uc1\Db1\uc2\Dd2\uc3\Db3; 
\draw (0,-1.732) \Uc1\db1\Ua0\db0\Uc0\db1\Ua1\db2\Uc2\db3\Ua3;
%\draw (1,-3.464)
%\draw (1,-5.192)
%\draw (0,-6.928) 
%\draw (0,-8.660)
\end{tikzpicture}\qquad \mbox{(2)}%
\def\Uc#1{\Ucolor{orange!65!white}{$2_{\mathrm #1}$}}%
\def\Ua#1{\Ucolor{yellow!20!white}{$0_{\mathrm #1}$}}%
\def\Db#1{\Dcolor{yellow!60!white}{$1_{\mathrm #1}$}}%
\def\Dd#1{\Dcolor{red!85!brown!70!white}{$3_{\mathrm #1}$}}%
\begin{tikzpicture}[scale=0.65, rotate=90]
\draw (0, 0    ) \Dd2\uo \Db1\uo \Dd2\uo \Db5\uo \Dd7\uo \Db A; 
\draw (0,-1.732) \Uc1\db0\Ua0\db0\Uc1\db3\Ua4\db5\Uc6\db7\Ua9;
\draw (0,-3.464) \dd1\uc0\db0\uc0\dd1\uc2\db3\uc6\dd7\uc8\db A;      
%\draw (1,-5.192)
%\draw (0,-6.928) 
%\draw (0,-8.660)
\end{tikzpicture}$$
By Lemma~\ref{l:infin}, we have that the number of colorings is infinite and each of them is obtained from 
one (say, from the unique coloring that corresponds to subcase (1)) by shifting coloring in some cosets of $X$.

$$\input{03-102-201-30ab}$$

The case {\boldmath${[03{-}102{-}21]}$} leads to a contradiction: $$\def\ua#1{\ucolor{yellow!20!white}{$0_#1$}}%
\def\da#1{\dcolor{yellow!20!white}{$0_#1$}}%
\def\ub#1{\ucolor{yellow!60!white}{$1_#1$}}%
\def\db#1{\dcolor{yellow!60!white}{$1_#1$}}%
\def\uc#1{\ucolor{orange!65!white}{$2_#1$}}%
\def\dc#1{\dcolor{orange!65!white}{$2_#1$}}%
\def\ud#1{\ucolor{red!85!brown!70!white}{$3_#1$}}%
\def\dd#1{\dcolor{red!85!brown!70!white}{$3_#1$}}%
\begin{tikzpicture}[scale=0.65, rotate=-90]
%\draw (0,-1.732);
\draw (1,-3.464)     \da4\ub5\dq ;
\draw (0,-5.192) \dc2\ub3\dc2\uc4;
\draw (0,-6.928) \ub1\da0\ub1;
%\draw (0,-8.660);
\end{tikzpicture}$$

In the case {\boldmath${[03{-}102{-}30]}$}, the coloring is unique:
$$\def\ua#1{\ucolor{yellow!20!white}{$0_#1$}}%
\def\da#1{\dcolor{yellow!20!white}{$0_#1$}}%
\def\ub#1{\ucolor{yellow!60!white}{$1_#1$}}%
\def\db#1{\dcolor{yellow!60!white}{$1_#1$}}%
\def\uc#1{\ucolor{orange!65!white}{$2_#1$}}%
\def\dc#1{\dcolor{orange!65!white}{$2_#1$}}%
\def\ud#1{\ucolor{red!85!brown!70!white}{$3_#1$}}%
\def\dd#1{\dcolor{red!85!brown!70!white}{$3_#1$}}%
\def\ue#1{\ucolor{brown!80!black}{$4_#1$}}%
\def\de#1{\dcolor{brown!80!black}{$4_#1$}}%
\begin{tikzpicture}[scale=0.65, rotate=-90]
\draw (0,-1.732)\ub7\dc6\ub5\da4\ub5\dc6\ub7;
\draw (0,-3.464)\da7\ub3\dc2\ub3\dc2\ub3\da7;
\draw (0,-5.192)\ub3\dc2\ub1\da0\ub1\dc2\ub3;
\draw (0,-6.928)\da4\ub3\dc2\ub1\dc2\ub3\da4;
\draw (0,-8.660)\ub5\dc6\ub3\da7\ub3\dc6\ub5;
\end{tikzpicture}
\qquad \input{03-102-30b}$$
%(at step 5, we used the obvious fact that there are no two color-$0$ nodes at distance $2$ from each other).

In the case {\boldmath${[03{-}111{-}...]}$},  there are, up to rotation and reflection, 
two ways to color the nodes at distance $2$ from the original color-$0$ node:
$$\mbox{(a)} \raisebox{-2em}{\def\ua#1{\ucolor{yellow!20!white}{$0_#1$}}%
\def\da#1{\dcolor{yellow!20!white}{$0_#1$}}%
\def\ub#1{\ucolor{yellow!60!white}{$1_#1$}}%
\def\db#1{\dcolor{yellow!60!white}{$1_#1$}}%
\def\uc#1{\ucolor{orange!65!white}{$2_#1$}}%
\def\dc#1{\dcolor{orange!65!white}{$2_#1$}}%
\begin{tikzpicture}[scale=0.65, rotate=-90]
\draw (1,-5.192)    \db2\uo \dc2 ;
\draw (0,-6.928)\dc2\ub1\da0\ub1\db2 ;
\draw (0,-8.660)%\ucolor{orange!65!white}{$\mathbf2_3$} 
                \uo \db2\ub1\dc2\uo ;
\end{tikzpicture}
} \qquad \mbox{(b)}\raisebox{-2em}{\def\ua#1{\ucolor{yellow!20!white}{$0_#1$}}%
\def\da#1{\dcolor{yellow!20!white}{$0_#1$}}%
\def\ub#1{\ucolor{yellow!60!white}{$1_#1$}}%
\def\db#1{\dcolor{yellow!60!white}{$1_#1$}}%
\def\uc#1{\ucolor{orange!65!white}{$2_#1$}}%
\def\dc#1{\dcolor{orange!65!white}{$2_#1$}}%
\begin{tikzpicture}[scale=0.65, rotate=-90]
\draw (1,-5.192)    \db2\uo \db2 ;
\draw (0,-6.928)\dc2\ub1\da0\ub1\dc2 ;
\draw (0,-8.660)\uo\db2\ub1\dc2\ucolor{white}{$\boldsymbol b$} ;
\end{tikzpicture}
}$$
Let us complete the first variant:
$$\raisebox{-2em}{\def\uo{\ucolor{white}{$\ $}}%
\def\do{\dcolor{white}{$\ $}}%
\def\dxa{\dcolor{white}{$\boldsymbol a$}}%
\def\uxa{\ucolor{white}{$\boldsymbol a$}}%
\def\ua#1{\ucolor{yellow!20!white}{$0_#1$}}%
\def\da#1{\dcolor{yellow!20!white}{$0_#1$}}%
\def\ub#1{\ucolor{yellow!60!white}{$1_#1$}}%
\def\db#1{\dcolor{yellow!60!white}{$1_#1$}}%
\def\uc#1{\ucolor{orange!65!white}{$2_#1$}}%
\def\dc#1{\dcolor{orange!65!white}{$2_#1$}}%
\begin{tikzpicture}[scale=0.65, rotate=-90]
\draw (0,-3.464)\db3\uo\dxa\uo\do\uo\do;
\draw (0,-5.192)\ua2\db.\uc1\dc.\uxa\db3\uo ;
\draw (0,-6.928)\dc.\ub.\da.\ub.\db.\ua2\db3;
\draw (0,-8.660)\uc1\db.\ub.\dc.\uc1\do \uo ;
\end{tikzpicture}
}\qquad {\boldsymbol a}=1:\raisebox{-2em}{\def\uo{\ucolor{white}{$\ $}}%
\def\do{\dcolor{white}{$\ $}}%
\def\dxa{\dcolor{white}{$\boldsymbol a$}}%
\def\ua#1{\ucolor{yellow!20!white}{$0_#1$}}%
\def\da#1{\dcolor{yellow!20!white}{$0_#1$}}%
\def\ub#1{\ucolor{yellow!60!white}{$1_#1$}}%
\def\db#1{\dcolor{yellow!60!white}{$1_#1$}}%
\def\uc#1{\ucolor{orange!65!white}{$2_#1$}}%
\def\dc#1{\dcolor{orange!65!white}{$2_#1$}}%
\begin{tikzpicture}[scale=0.65, rotate=-90]
\draw (0,-3.464)\db.\uq \db0\ub2\da1\uo\do;
\draw (0,-5.192)\ua.\db.\uc.\dc.\ub0\db.\uo ;
\draw (0,-6.928)\dc.\ub.\da.\ub.\db.\ua.\db.;
\draw (0,-8.660)\uc.\db.\ub.\dc.\uc.\do\uo;
\end{tikzpicture}
}$$
If the two nodes marked by {$\boldsymbol a$} have color $1$, then we get a contradiction; 
hence, their color is $2$: 
$$\def\do{\dcolor{white}{$\ $}}%
\def\ua#1{\ucolor{yellow!20!white}{$0_#1$}}%
\def\da#1{\dcolor{yellow!20!white}{$0_#1$}}%
\def\ub#1{\ucolor{yellow!60!white}{$1_#1$}}%
\def\db#1{\dcolor{yellow!60!white}{$1_#1$}}%
\def\uc#1{\ucolor{orange!65!white}{$2_#1$}}%
\def\dc#1{\dcolor{orange!65!white}{$2_#1$}}%
\begin{tikzpicture}[scale=0.65, rotate=-90]
\draw (0,-3.464)\db.\ub3\dc0\uc2\dc1\ub3\da4;
\draw (0,-5.192)\ua.\db.\uc.\dc.\uc0\db.\ub1\dc5;
\draw (0,-6.928)\dc.\ub.\da.\ub.\db.\ua.\db.\uc6;
\draw (0,-8.660)\uc.\db.\ub.\dc.\uc1\do\ub7;
\end{tikzpicture}
\qquad\def\uo{\ucolor{white}{$\ $}}
\def\do{\dcolor{white}{$\ $}}
\def\dxa{\dcolor{white}{$\boldsymbol a$}}
\def\uq{\ucolor{white}{$\boldsymbol ?$}}
\def\ua{\ucolor{yellow!20!white}{$0$}}
\def\da{\dcolor{yellow!20!white}{$0$}}
\def\ub{\ucolor{yellow!60!white}{$1$}}
\def\db{\dcolor{yellow!60!white}{$1$}}
\def\0{++(1,0)}
\def\uc{\ucolor{orange!65!white}{$2$}}
\def\dc{\dcolor{orange!65!white}{$2$}}
\def\ud{\ucolor{red!85!brown!70!white}{$3$}}
\def\dd{\dcolor{red!85!brown!70!white}{$3$}}
\def\ue{\ucolor{brown!80!black}{$4$}}
\def\de{\dcolor{brown!80!black}{$4$}}
\begin{tikzpicture}[scale=0.65, rotate=-90]
\draw (0, 6.928)\dc\uc\dc\ub\da\ub\db\ua;%\db\uc\dc\uc\db\ub\dc;
\draw (0, 5.192)\uc\dc\uc\db\ub\dc\uc\dc;%\ub\da\ub\db\ua\db\uc;
\draw (0, 3.464)\da\ub\db\ua\db\uc\dc\uc;%\db\ub\dc\uc\dc\ub\da;
\draw (0, 1.732)\ub\dc\uc\dc\ub\da\ub\db;%\ua\db\uc\dc\uc\db\ub\dc;
\draw (0,-0.000)\db\uc\dc\uc\db\ub\dc\uc;%\dc\ub\da\ub\db\ua\db\uc;
\draw (0,-1.732)\ub\da\ub\db\ua\db\uc\dc;%\uc\db\ub\dc\uc\dc\ub\da;
\draw (0,-3.464)\db\ub\dc\uc\dc\ub\da\ub;%\db\ua\db\uc\dc\uc\db\ub\dc;
\draw (0,-5.192)\ua\db\uc\dc\uc\db\ub\dc;%\uc\dc\ub\da\ub\db\ua\db\uc;
\draw (0,-6.928)\dc\ub\da\ub\db\ua\db\uc;%\dc\uc\db\ub\dc\uc\dc\ub\da;
\draw (0,-8.660)\uc\db\ub\dc\uc\dc\ub\da;%\ub\db\ua\db\uc\dc\uc\db\ub;
\end{tikzpicture}
$$
By analogy, the coloring of the whole grid is uniquely reconstructed; the cor\-res\-pond\-ing array is $[03{-}111{-}12]$.
Now, consider variant (b). In both subcases $\boldsymbol{b}=3$ and $\boldsymbol{b}=2$, 
the coloring is uniquely reconstructed,
resulting in the arrays $[03{-}111{-}111{-}30]$ and $[03{-}111{-}12]$ 
(note that we get a nonequivalent coloring with the same array as in variant (a)), respectively:
$$\def\uo{\ucolor{white}{$\ $}}
\def\do{\dcolor{white}{$\ $}}
\def\ua#1{\ucolor{yellow!20!white}{$0_{\mathrm #1}$}}
\def\da#1{\dcolor{yellow!20!white}{$0_{\mathrm #1}$}}
\def\ub#1{\ucolor{yellow!60!white}{$1_{\mathrm #1}$}}
\def\db#1{\dcolor{yellow!60!white}{$1_{\mathrm #1}$}}
\def\0{++(1,0)}
\def\uc#1{\ucolor{orange!65!white}{$2_{\mathrm #1}$}}
\def\dc#1{\dcolor{orange!65!white}{$2_{\mathrm #1}$}}
\def\ud#1{\ucolor{red!85!brown!70!white}{$3_{\mathrm #1}$}}
\def\dd#1{\dcolor{red!85!brown!70!white}{$3_{\mathrm #1}$}}
\def\ue#1{\ucolor{brown!80!black}{$4_{\mathrm #1}$}}
\def\de#1{\dcolor{brown!80!black}{$4_{\mathrm #1}$}}
\begin{tikzpicture}[scale=0.65, rotate=-90]
%\draw (0, 6.928)\do \uo \do \uo \do \uo \do \uo \do ;
%\draw (0, 5.192)\uo \do \uo \do \uo \do \uo \do \uo ;
%\draw (0, 3.464)\do \uo \do \uo \do \uo \do \uo \do ;
%\draw (0, 1.732)\uo \do \uo \do \uo \do \uo \do \uo ;
%\draw (0,-0.000)\do \uo \do \uo \do \uo \do \uo \do;
\draw (0,-1.732)\0  \dc D\0  \da B\0 \dc A\0 \db C\0 ;
\draw (0,-3.464)\dd C\uc B\db2\ub A\dc4\ud9\dc8\ub A\da B;
\draw (0,-5.192)\uc2\db.\ua1\db.\uc2\dd3\uc7\db7\ub8;
\draw (0,-6.928)\dc.\ub.\da.\ub.\dc.\uc2\db4\ua6\db7;
\draw (0,-8.660)\uc1\db.\ub.\dc.\ud0\dc6\ub5\da7\ub8;
\end{tikzpicture}
 \qquad \input{03-111g}$$

The array {\boldmath${[03{-}12]}$} is reduced to $[03{-}102{-}201{-}30]$, if we give new colors $2$ and $3$
to the even color-$1$ nodes and the odd color-$0$ nodes, respectively 
(in particular, there is infinite number of nonequivalent colorings with array $[03{-}12]$).

The next case is {\boldmath${[03{-}201{-}...]}$}. As in the case $[03{-}111{-}...]$, 
there are two possibilities to color the nodes at distance $2$ from the starting node:
$$\def\0{++(1,0)}%
\def\ua#1{\ucolor{yellow!20!white}{$0_#1$}}%
\def\da#1{\dcolor{yellow!20!white}{$0_#1$}}%
\def\ub#1{\ucolor{yellow!60!white}{$1_#1$}}%
\def\db#1{\dcolor{yellow!60!white}{$1_#1$}}%
\def\uc#1{\ucolor{orange!65!white}{$2_#1$}}%
\def\dc#1{\dcolor{orange!65!white}{$2_#1$}}%
\begin{tikzpicture}[scale=0.65, rotate=-90]
\draw (0,-5.192)\0  \da2\ub3\dc2;
\draw (0,-6.928)\dc2\ub1\da0\ub1\da2;
\draw (0,-8.660)\0  \da2\ub1\dc2;
\end{tikzpicture}
\qquad\def\0{++(1,0)}%
\def\ua#1{\ucolor{yellow!20!white}{$0_#1$}}%
\def\da#1{\dcolor{yellow!20!white}{$0_#1$}}%
\def\ub#1{\ucolor{yellow!60!white}{$1_#1$}}%
\def\db#1{\dcolor{yellow!60!white}{$1_#1$}}%
\def\uc#1{\ucolor{orange!65!white}{$2_#1$}}%
\def\dc#1{\dcolor{orange!65!white}{$2_#1$}}%
\begin{tikzpicture}[scale=0.65, rotate=-90]
\draw (0,-5.192)\0  \da2\ub3\dc2;
\draw (0,-6.928)\dc2\ub1\da0\ub1\da2;
\draw (0,-8.660)%\ucolor{orange!65!white}{$\mathbf2_3$} 
                \0  \dc2\ub1\da2;
\end{tikzpicture}
$$
However, we will use the common part of both variants. As we see, a color-$2$ node has at least two 
color-$1$ neighbors; we consider three subcases:
$$ \mbox{1) }[03{-}201{-}201{-}...]:\raisebox{-2em}{\def\do{\dcolor{white}{$\ $}}%
\def\ua#1{\ucolor{yellow!20!white}{$0_#1$}}%
\def\da#1{\dcolor{yellow!20!white}{$0_#1$}}%
\def\ub#1{\ucolor{yellow!60!white}{$1_#1$}}%
\def\db#1{\dcolor{yellow!60!white}{$1_#1$}}%
\def\uc#1{\ucolor{orange!65!white}{$2_#1$}}%
\def\dc#1{\dcolor{orange!65!white}{$2_#1$}}%
\def\ud#1{\ucolor{red!85!brown!70!white}{$3_#1$}}%
\def\dd#1{\dcolor{red!85!brown!70!white}{$3_#1$}}%
\begin{tikzpicture}[scale=0.65, rotate=-90]
\draw (2,-3.464)        \da1\ub2\dc2;
\draw (1,-5.192)    \da.\ub.\dc.\ud1\dc2;
\draw (0,-6.928)\dc.\ub.\da.\ub.\da.\ub1;
\draw (1,-8.660)    \do \ub.\do ;
\end{tikzpicture}
} \qquad
   \mbox{2) }[03{-}201{-}21]:\raisebox{-2em}{\def\da#1{\dcolor{yellow!20!white}{$0_#1$}}%
\def\ub#1{\ucolor{yellow!60!white}{$1_#1$}}%
\def\uc#1{\ucolor{orange!65!white}{$2_#1$}}%
\def\dc#1{\dcolor{orange!65!white}{$2_#1$}}%
\begin{tikzpicture}[scale=0.65, rotate=-90]
\draw (0,-0.000)\dq \ub3\da2;
\draw (0,-1.732)\uc1\dc0\ub1;
\end{tikzpicture}
} $$
1) As we see, in subcase $[03{-}201{-}201{-}...]$, 
the only feasible array is $[03{-}201{-}201{-}30]$,
which symmetrical to  $[03{-}102{-}102{-}30]$, considered above;
2) the array $[03{-}201{-}21]$ is not feasible;
3) the array symmetrical to $[03{-}201{-}30]$ was considered above.

The array {\boldmath${[03{-}21]}$} is not feasible:
$$\def\ua#1{\ucolor{yellow!20!white}{$0_#1$}}%
\def\da#1{\dcolor{yellow!20!white}{$0_#1$}}%
\def\ub#1{\ucolor{yellow!60!white}{$1_#1$}}%
\def\db#1{\dcolor{yellow!60!white}{$1_#1$}}%
\begin{tikzpicture}[scale=0.65, rotate=-30]
\draw (0,-3.464)\db4\ua3 \db1\ub2\da3;
\draw (0,-5.192)\ua5\db1\ua0\db1\ub4;
\draw (1,-6.928)    \uq \db6\ua5 ;
\end{tikzpicture}
$$

The array {\boldmath${[03{-}30]}$} implies the nodes are colored in accordance with their parity.

We have finished to consider the arrays that start with $[03{-}...]$.

Next, if the array start with {\boldmath$[12{-}102{-}...]$}, then the next-but-one entry of the array, $a_2$ is not zero:
$$%
\begin{tikzpicture}[scale=0.65, rotate=-30]
\draw (0,-5.192)\dc2\uc2\db1;
\draw (0,-6.928)\ub1\da0\ua0;
\end{tikzpicture}
$$
So, it is sufficient to consider the following three possibilities for the next three entries of the array.
%(we do not count $[12{-}102{-}30]$, considered above as $[03{-}201{-}21]$).

1) Consider case $[12{-}102{-}111{-}...]$. Starting from two neighbor color-$0$ nodes,
we get a unique coloring with array $[12{-}102{-}111{-}21]$:
$$\input{12-102-111}$$

2) Consider the array $[12{-}102{-}12]$. Up to symmetry,
we get a unique coloring:
$$\def\ut{ ++(1, 0.29) node  {\raisebox{-0.25em}[0.2em][0em]{\makebox[0mm][c]{$\cdots$}}}  ++(0,-0.29) }
\def\dt{ ++(1,-0.29) node  {\raisebox{-0.25em}[0.2em][0em]{\makebox[0mm][c]{$\cdots$}}}  ++(0, 0.29) }
\def\uo{\ucolor{white}{$\ $}}
\def\do{\dcolor{white}{$\ $}}
\def\ux#1{\ucolor{white}{$#1$}}
\def\dx#1{\dcolor{white}{$#1$}}
\def\ua#1{\ucolor{yellow!20!white}{$0_{\mathrm{#1}}$}}
\def\da#1{\dcolor{yellow!20!white}{$0_{\mathrm{#1}}$}}
\def\ub#1{\ucolor{yellow!60!white}{$1_{\mathrm{#1}}$}}
\def\db#1{\dcolor{yellow!60!white}{$1_{\mathrm{#1}}$}}
\def\0{++(1,0)}
\def\uc#1{\ucolor{orange!65!white}{$2_{\mathrm{#1}}$}}
\def\dc#1{\dcolor{orange!65!white}{$2_{\mathrm{#1}}$}}
\def\ud#1{\ucolor{red!85!brown!70!white}{$3_{\mathrm{#1}}$}}
\def\dd#1{\dcolor{red!85!brown!70!white}{$3_{\mathrm{#1}}$}}
\def\ue#1{\ucolor{brown!80!black}{$4_{\mathrm{#1}}$}}
\def\de#1{\dcolor{brown!80!black}{$4_{\mathrm{#1}}$}}
\begin{tikzpicture}[scale=0.65, rotate=0]
%\draw (0, 6.928)\do \uo \do \uo \do \uo \do \uo \do ;
%\draw (0, 5.192)\0\uo  \do \uo  \do \uo  \do  \uo  \do  \uo  \do  \uo ;
%\draw (0, 3.464)\0\do  \uo \do  \uo \do  \uo  \do  \uo  \do  \uo  \do ;
%\draw (0, 1.732)\0\uo  \do \uo  \do \uo  \do  \uo  \do  \uo  \do  \uo ;
%\draw (0,-0.000)\0\do  \uo \do  \uo \do  \uo  \do  \uo  \do  \uo  \do ;
\draw (0,-1.732)\dc{}\uc9 \dc6\ub8 \da7\ub8 \dc8 \uc9 \db C\ua D\db E\uc F;
\draw (0,-3.464)\ub7 \dc6 \uc5\db4 \ua5\db6 \uc7 \dc8 \uc9 \dc B\uc C\dc F;
\draw (0,-5.192)\db7 \uc3 \dc2\uc3 \dc2\uc3 \dc7 \ub8 \da9 \ub A\dc B\uc C;
\draw (0,-6.928)\uc3 \dc2 \ub1\da0 \ub1\dc2 \uc3 \db8 \ua9 \db A\uc B\dc C;
\draw (0,-8.660)\dc3 \uc2 \db1\ua0 \db1\uc2 \dc3 \uc9 \dc A\uc B\dc C\ub{};
\end{tikzpicture}$$

3) The array $[12{-}102{-}21]$ is not feasible:
$$\def\ut{ ++(1, 0.29) node  {\raisebox{-0.25em}[0.2em][0em]{\makebox[0mm][c]{$\cdots$}}}  ++(0,-0.29) }
\def\dt{ ++(1,-0.29) node  {\raisebox{-0.25em}[0.2em][0em]{\makebox[0mm][c]{$\cdots$}}}  ++(0, 0.29) }
\def\uo{\ucolor{white}{$\ $}}
\def\do{\dcolor{white}{$\ $}}
\def\ux#1{\ucolor{white}{$#1$}}
\def\dx#1{\dcolor{white}{$#1$}}
\def\ua#1{\ucolor{yellow!20!white}{$0_{\mathrm{#1}}$}}
\def\da#1{\dcolor{yellow!20!white}{$0_{\mathrm{#1}}$}}
\def\ub#1{\ucolor{yellow!60!white}{$1_{\mathrm{#1}}$}}
\def\db#1{\dcolor{yellow!60!white}{$1_{\mathrm{#1}}$}}
\def\0{++(1,0)}
\def\uc#1{\ucolor{orange!65!white}{$2_{\mathrm{#1}}$}}
\def\dc#1{\dcolor{orange!65!white}{$2_{\mathrm{#1}}$}}
\def\ud#1{\ucolor{red!85!brown!70!white}{$3_{\mathrm{#1}}$}}
\def\dd#1{\dcolor{red!85!brown!70!white}{$3_{\mathrm{#1}}$}}
\def\ue#1{\ucolor{brown!80!black}{$4_{\mathrm{#1}}$}}
\def\de#1{\dcolor{brown!80!black}{$4_{\mathrm{#1}}$}}
\begin{tikzpicture}[scale=0.65, rotate=0]
%\draw (0, 6.928)\do \uo \do \uo \do \uo \do \uo \do ;
%\draw (0, 5.192)\uo  \do \uo  \do \uo  \do  \uo  \do  \uo  \do  \uo ;
\draw (0, 3.464)\dc2\ub3\dc2\uc4\dcolor{white}{\bf?} ;
\draw (0, 1.732)\ub1\da0\ub1\dc3\ub4;
\draw (0,-0.000)\0  \ua0\db1\uc3;
\end{tikzpicture}$$

The arrays starting with {\boldmath${[12{-}111{-}102{-}...]}$}
are not feasible Lemma~\ref{l:111-102}.

Now consider  {\boldmath${[12{-}111{-}111{-}...]}$}. 
Taking into account the forbidden configuration
$$\def\0{++(1,0)}%
\def\uo{\ucolor{white}{$\ $}}%
\def\do{\dcolor{white}{$\ $}}%
\def\dxa{\dcolor{white}{$\boldsymbol a$}}%
\def\uxa{\ucolor{white}{$\boldsymbol a$}}%
\def\dxb{\dcolor{white}{$\boldsymbol b$}}%
\def\uxb{\ucolor{white}{$\boldsymbol b$}}%
\def\ua{\ucolor{yellow!20!white}{$0$}}%
\def\da{\dcolor{yellow!20!white}{$0$}}%
\def\ub{\ucolor{yellow!60!white}{$1$}}%
\def\db{\dcolor{yellow!60!white}{$1$}}%
\def\uc{\ucolor{orange!65!white}{$2$}}%
\def\dc{\dcolor{orange!65!white}{$2$}}%
\def\ud{\ucolor{red!85!brown!70!white}{$3$}}%
\def\dd{\dcolor{red!85!brown!70!white}{$3$}}%
\def\ue{\ucolor{brown!80!black}{$4$}}%
\def\de{\dcolor{brown!80!black}{$4$}}%
\def\um{\ucolor{brown!80!black}{\ }}%
\def\dm{\dcolor{brown!80!black}{\ }}%
\def\un{\ucolor{black}{\ }}%
\def\dn{\dcolor{black}{\ }}%
\def\Ua{\Ucolor{yellow!20!white}{$0$}}%
\def\Da{\Dcolor{yellow!20!white}{$0$}}%
\def\Ub{\Ucolor{yellow!60!white}{$1$}}%
\def\Db{\Dcolor{yellow!60!white}{$1$}}%
\def\Uc{\Ucolor{orange!65!white}{$2$}}%
\def\Dc{\Dcolor{orange!65!white}{$2$}}%
\def\Ud{\Ucolor{red!85!brown!70!white}{$3$}}%
\def\Dd{\Dcolor{red!85!brown!70!white}{$3$}}%
\def\ut{ ++(1, 0.29) node  {\raisebox{-0.25em}[0.2em][0em]{\makebox[0mm][c]{$\cdots$}}}  ++(0,-0.29) }%
\def\dt{ ++(1,-0.29) node  {\raisebox{-0.25em}[0.2em][0em]{\makebox[0mm][c]{$\cdots$}}}  ++(0, 0.29) }%%
\begin{tikzpicture}[scale=0.65, rotate=-30]
\draw (0, 1.732)\dq\ub\db;
\draw (0,-0.000)\ub\da\ua;
\end{tikzpicture}$$
we see that there is an infinite sequence of nodes colored by $0$:\\
$I_0=\{ (yxz)^i, (yxz)^ix, x(yxz)^i, x(yxz)^ix \,:\, i=0,1,2,... \}$.
$$\input{12-111-111b}$$
The nodes at distance one from this sequence, i.e., 
$I_1=\{vy,vz \,:\, v\in I_0\}$,
are colored by $1$; the next rows of nodes $I_2=\{vyz,vzy \,:\, v\in I_0\}$, by $2$;
then $I_3=\{vyzy,vzyz \,:\, v\in I_0\}$ by $3$.
The color of $I_4=\{vyzyz,vzyzy \,:\, v\in I_0\}$ is defined by $c_3$, $a_3$, $b_3$.
It can be $2$, $3$, or $4$ for the cases $[12{-}111{-}111{-}21]$, $[12{-}111{-}111{-}12]$,
or $[12{-}111{-}111{-}111{-}...]$, respectively. In the first and the second cases, 
the grid will be uniquely colored by the colors 
$0$, $1$, $2$, $3$, $2$, $1$, $0$, $1$, $2$, $3$, $2$, $1$, $0$, $1$, $\ldots$ 
and $0$, $1$, $2$, $3$, $3$, $2$, $1$, $0$, $1$, $2$, $3$, $3$, $2$, $\ldots$,
respectively, in accordance with the distance to $I_0$.
In the last case, we repeat the arguments for $I_5$, resulting in $[12{-}111{-}111{-}111{-}21]$, $[12{-}111{-}111{-}111{-}12]$,
or $[12{-}111{-}111{-}111{-}111{-}...]$, and so on. As the number of colors is finite, at some step the color number will stop
to increase, the array will be determined, and the rest of the grid will be colored uniquely.
The array will be either $[12{-}111{-}...{-}111{-}21]$, or $[12{-}111{-}...{-}111{-}12]$.
$$\def\ut{ ++(1, 0.29) node  {\raisebox{-0.25em}[0.2em][0em]{\makebox[0mm][c]{$\cdots$}}}  ++(0,-0.29) }
\def\dt{ ++(1,-0.29) node  {\raisebox{-0.25em}[0.2em][0em]{\makebox[0mm][c]{$\cdots$}}}  ++(0, 0.29) }
\def\uo{\ucolor{white}{$\ $}}
\def\do{\dcolor{white}{$\ $}}
\def\ux{\ucolor{white}{$$}}
\def\dx{\dcolor{white}{$$}}
\def\ua{\ucolor{yellow!20!white}{$0$}}
\def\da{\dcolor{yellow!20!white}{$0$}}
\def\ub{\ucolor{yellow!60!white}{$1$}}
\def\db{\dcolor{yellow!60!white}{$1$}}
\def\0{++(1,0)}
\def\uc{\ucolor{orange!65!white}{$2$}}
\def\dc{\dcolor{orange!65!white}{$2$}}
\def\ud{\ucolor{red!85!brown!70!white}{$3$}}
\def\dd{\dcolor{red!85!brown!70!white}{$3$}}
\def\ue{\ucolor{brown!80!black}{$\ $}}
\def\de{\dcolor{brown!80!black}{$\ $}}
\def\uf{\ucolor{black}{$\ $}}
\def\df{\dcolor{black}{$\ $}}
\begin{tikzpicture}[scale=0.65, rotate=-90]
%\draw (0, 8.660) \dc\ub\da\ub\dc\dt\ue\df\ue;
%\draw (0, 6.928) \uc\db\ua\db\uc\0 \de\uf\de;
\draw (0, 5.192) \dc\ub\da\ub\dc\dt\ue\df\ue;
\draw (0, 3.464) \uc\db\ua\db\uc\0 \de\uf\de;
\draw (0, 1.732) \dc\ub\da\ub\dc\dt\ue\df\ue;
\draw (0,-0.000) \uc\db\ua\db\uc\0 \de\uf\de;
\draw (0,-1.732) \dc\ub\da\ub\dc\dt\ue\df\ue;
\draw (0,-3.464) \uc\db\ua\db\uc\0 \de\uf\de;
\draw (0,-5.192) \dc\ub\da\ub\dc\dt\ue\df\ue;
\draw (0,-6.928) \uc\db\ua\db\uc\0 \de\uf\de;
\draw (0,-8.660) \dc\ub\da\ub\dc\dt\ue\df\ue;
\end{tikzpicture}\qquad\def\ut{ ++(1, 0.29) node  {\raisebox{-0.25em}[0.2em][0em]{\makebox[0mm][c]{$\cdots$}}}  ++(0,-0.29) }
\def\dt{ ++(1,-0.29) node  {\raisebox{-0.25em}[0.2em][0em]{\makebox[0mm][c]{$\cdots$}}}  ++(0, 0.29) }
\def\uo{\ucolor{white}{$\ $}}
\def\do{\dcolor{white}{$\ $}}
\def\ux{\ucolor{white}{$$}}
\def\dx{\dcolor{white}{$$}}
\def\ua{\ucolor{yellow!20!white}{$0$}}
\def\da{\dcolor{yellow!20!white}{$0$}}
\def\ub{\ucolor{yellow!60!white}{$1$}}
\def\db{\dcolor{yellow!60!white}{$1$}}
\def\0{++(1,0)}
\def\uc{\ucolor{orange!65!white}{$2$}}
\def\dc{\dcolor{orange!65!white}{$2$}}
\def\ud{\ucolor{red!85!brown!70!white}{$3$}}
\def\dd{\dcolor{red!85!brown!70!white}{$3$}}
\def\ue{\ucolor{brown!80!black}{$\ $}}
\def\de{\dcolor{brown!80!black}{$\ $}}
\def\uf{\ucolor{black}{$\ $}}
\def\df{\dcolor{black}{$\ $}}
\begin{tikzpicture}[scale=0.65, rotate=-90]
%\draw (0, 8.660) \dc\ub\da\ub\dc\dt\de\uf\de;
%\draw (0, 6.928) \uc\db\ua\db\uc\0 \ue\df\ue;
\draw (0, 5.192) \dc\ub\da\ub\dc\dt\de\uf\de;
\draw (0, 3.464) \uc\db\ua\db\uc\0 \ue\df\ue;
\draw (0, 1.732) \dc\ub\da\ub\dc\dt\de\uf\de;
\draw (0,-0.000) \uc\db\ua\db\uc\0 \ue\df\ue;
\draw (0,-1.732) \dc\ub\da\ub\dc\dt\de\uf\de;
\draw (0,-3.464) \uc\db\ua\db\uc\0 \ue\df\ue;
\draw (0,-5.192) \dc\ub\da\ub\dc\dt\de\uf\de;
\draw (0,-6.928) \uc\db\ua\db\uc\0 \ue\df\ue;
\draw (0,-8.660) \dc\ub\da\ub\dc\dt\de\uf\de;
\end{tikzpicture}$$
$$\def\ut{ ++(1, 0.29) node  {\raisebox{-0.25em}[0.2em][0em]{\makebox[0mm][c]{$\cdots$}}}  ++(0,-0.29) }
\def\dt{ ++(1,-0.29) node  {\raisebox{-0.25em}[0.2em][0em]{\makebox[0mm][c]{$\cdots$}}}  ++(0, 0.29) }
\def\uo{\ucolor{white}{$\ $}}
\def\do{\dcolor{white}{$\ $}}
\def\ux{\ucolor{white}{$$}}
\def\dx{\dcolor{white}{$$}}
\def\ua{\ucolor{yellow!20!white}{$0$}}
\def\da{\dcolor{yellow!20!white}{$0$}}
\def\ub{\ucolor{yellow!60!white}{$1$}}
\def\db{\dcolor{yellow!60!white}{$1$}}
\def\0{++(1,0)}
\def\uc{\ucolor{orange!65!white}{$2$}}
\def\dc{\dcolor{orange!65!white}{$2$}}
\def\ud{\ucolor{red!85!brown!70!white}{$3$}}
\def\dd{\dcolor{red!85!brown!70!white}{$3$}}
\def\ue{\ucolor{brown!80!black}{$\ $}}
\def\de{\dcolor{brown!80!black}{$\ $}}
\def\uf{\ucolor{black}{$\ $}}
\def\df{\dcolor{black}{$\ $}}
\begin{tikzpicture}[scale=0.65, rotate=-90]
%\draw (0, 8.660) \dc\ub\da\ub\dc\dt\ue\df\uf\de;
%\draw (0, 6.928) \uc\db\ua\db\uc\0\de\uf\df\ue;
\draw (0, 5.192) \dc\ub\da\ub\dc\dt\ue\df\uf\de;
\draw (0, 3.464) \uc\db\ua\db\uc\0 \de\uf\df\ue;
\draw (0, 1.732) \dc\ub\da\ub\dc\dt\ue\df\uf\de;
\draw (0,-0.000) \uc\db\ua\db\uc\0 \de\uf\df\ue;
\draw (0,-1.732) \dc\ub\da\ub\dc\dt\ue\df\uf\de;
\draw (0,-3.464) \uc\db\ua\db\uc\0 \de\uf\df\ue;
\draw (0,-5.192) \dc\ub\da\ub\dc\dt\ue\df\uf\de;
\draw (0,-6.928) \uc\db\ua\db\uc\0 \de\uf\df\ue;
\draw (0,-8.660) \dc\ub\da\ub\dc\dt\ue\df\uf\de;
\end{tikzpicture}\qquad\def\ut{ ++(1, 0.29) node  {\raisebox{-0.25em}[0.2em][0em]{\makebox[0mm][c]{$\cdots$}}}  ++(0,-0.29) }
\def\dt{ ++(1,-0.29) node  {\raisebox{-0.25em}[0.2em][0em]{\makebox[0mm][c]{$\cdots$}}}  ++(0, 0.29) }
\def\uo{\ucolor{white}{$\ $}}
\def\do{\dcolor{white}{$\ $}}
\def\ux{\ucolor{white}{$$}}
\def\dx{\dcolor{white}{$$}}
\def\ua{\ucolor{yellow!20!white}{$0$}}
\def\da{\dcolor{yellow!20!white}{$0$}}
\def\ub{\ucolor{yellow!60!white}{$1$}}
\def\db{\dcolor{yellow!60!white}{$1$}}
\def\0{++(1,0)}
\def\uc{\ucolor{orange!65!white}{$2$}}
\def\dc{\dcolor{orange!65!white}{$2$}}
\def\ud{\ucolor{red!85!brown!70!white}{$3$}}
\def\dd{\dcolor{red!85!brown!70!white}{$3$}}
\def\ue{\ucolor{brown!80!black}{$\ $}}
\def\de{\dcolor{brown!80!black}{$\ $}}
\def\uf{\ucolor{black}{$\ $}}
\def\df{\dcolor{black}{$\ $}}
\begin{tikzpicture}[scale=0.65, rotate=-90]
%\draw (0, 8.660) \dc\ub\da\ub\dc\dt\de\uf\df\ue;
%\draw (0, 6.928) \uc\db\ua\db\uc\0 \ue\df\uf\de;
\draw (0, 5.192) \dc\ub\da\ub\dc\dt\de\uf\df\ue;
\draw (0, 3.464) \uc\db\ua\db\uc\0 \ue\df\uf\de;
\draw (0, 1.732) \dc\ub\da\ub\dc\dt\de\uf\df\ue;
\draw (0,-0.000) \uc\db\ua\db\uc\0 \ue\df\uf\de;
\draw (0,-1.732) \dc\ub\da\ub\dc\dt\de\uf\df\ue;
\draw (0,-3.464) \uc\db\ua\db\uc\0 \ue\df\uf\de;
\draw (0,-5.192) \dc\ub\da\ub\dc\dt\de\uf\df\ue;
\draw (0,-6.928) \uc\db\ua\db\uc\0 \ue\df\uf\de;
\draw (0,-8.660) \dc\ub\da\ub\dc\dt\de\uf\df\ue;
\end{tikzpicture}$$

The case {\boldmath${[12{-}111{-}201{-}...]}$} leads to the array 
symmetrical to ${[12{-}102{-}111{-}21]}$, considered above:
$$%
\begin{tikzpicture}[scale=0.65, rotate=-30]
\draw (0,-3.464)\da0\ua0\db1\ub3\dc4;
\draw (0,-5.192)\ub1\db2\uc2\dd3\ud5;
\draw (1,-6.928)    \ua3\db4\uc4;
\end{tikzpicture}$$

Consider the array {\boldmath${[12{-}111{-}21]}$}.
Identifying the colors $0$ and $2$ in any coloring with this array, 
we obtain a coloring with array ${[12{-}21]}$.
Inversely, a coloring with array ${[12{-}21]}$ can be obtained from 
a coloring with array ${[12{-}111{-}21]}$ by identifying the colors $0$ and $2$
if and only if the set of pairs of neighbor nodes of color $0$ can be partitioned to two parts 
such that no two pairs from the same part are at distance $2$ from each other.
So, the result for {${[12{-}111{-}21]}$} follows from that for ${[12{-}21]}$ and will be considered later
(see the case ${[21{-}12]}$).

The array symmetrical to {\boldmath${[12{-}111{-}30]}$} was considered above.

The array {\boldmath${[12{-}12]}$} is feasible with a unique coloring, up to equivalence:
$$%
\begin{tikzpicture}[scale=0.65, rotate=-90]
\draw (0,-0.000)\db2\ua7\db2\ub6\da5\ub6;
\draw (0,-1.732)\ub1\da0\ub1\db2\ua3\db4;
\draw (0,-3.464)\db1\ua0\db1\ub2\da3\ub4;
\end{tikzpicture}$$

Case {\boldmath${[12{-}201{-}102{-}...]}$}. If the origin and $x$ are colored by $0$,
then the only possibility for $yz$ and $yzx$ is both having color $0$ too.
Similarly, $zy$, $zyx$, 
$$%
\begin{tikzpicture}[scale=0.65, rotate=90]
\draw (0,-3.464)\da2 \ub1 \da0 \ub1 \da2 ;%\0 \dt  ;
\draw (0,-5.192)\ua2 \db1 \ua0 \db1 \ua2 ;%\0 \ut  ;
\end{tikzpicture}$$
and, by induction, 
all nodes from $I_0=\{(zy)^i, (zy)^i x, (yz)^i, (yz)^i x \,:\,i=0,1,2,\ldots\}$ have color $0$. 
Denote by $I_d$ the set of nodes at distance $d$ from $I_0$. Then the nodes from $I_1$ are colored by $1$;
from $I_2$, by $2$; from $I_3$, by $3$.
The nodes from $I_4$ are colored by the same color, and it is not $2$:
$$%
\begin{tikzpicture}[scale=0.65, rotate=-90]
\draw (0,-0.000)\dc. \ud. \dc. \uq \dc.;
\draw (0,-1.732)\ud. \dc. \ud. \dc. \ud.;
\draw (0,-3.464)\da. \ub. \da. \ub. \da.;
\draw (0,-5.192)\ua. \0   \ua. \0   \ua.;
%\draw (0,-6.928);
%\draw (0,-8.660);
\end{tikzpicture}$$
If this color is $3$, then the nodes of $I_5$, $I_6$, and $I_7$ have colors $2$, $1$, and $0$, respectively, 
and then the situation repeats with period $8$ (in terms of the index $d$ of $I_d$).
In the case of $4$, we have two subcases for $I_5$.
The color $4$ for $I_5$ means the colors $3$, $2$, $1$, and $0$ for $I_6$, $I_7$, $I_8$, and $I_9$, respectively;
then, the colors repeat.
If the nodes of $I_5$ have color $5$, then we repeat the arguments above as many times as we need to exhaust all the colors,
whose number is finite. Finaly, we have the array $[12{-}201{-}...{-}102{-}21]$ if the number of colors is even
and  $[12{-}201{-}...{-}201{-}12]$ if it is odd. In any case, the coloring is unique.
$$%
\begin{tikzpicture}[scale=0.65, rotate=-90]
\draw (0, 2.732)\um\dn\um\dn\um\dn\um;
\draw (0, 1.000)\dm\un\dm\un\dm\un\dm;
\draw (0,-1.000)\0 \ut\0 \ut\0 \ut\0 ;
\draw (0,-1.732)\dc\0 \dc\0 \dc\0 \dc;
\draw (0,-3.464)\ub\da\ub\da\ub\da\ub;
\draw (0,-5.192)\db\ua\db\ua\db\ua\db;
\draw (0,-6.928)\uc\0 \uc\0 \uc\0 \uc;
%\draw (0,-8.660);
\end{tikzpicture}\qquad%
\begin{tikzpicture}[scale=0.65, rotate=-90]
\draw (0, 2.732)\dn\um\dn\um\dn\um\dn;
\draw (0, 1.000)\un\dm\un\dm\un\dm\un;
\draw (0,-1.000)\0 \ut\0 \ut\0 \ut\0 ;
\draw (0,-1.732)\dc\0 \dc\0 \dc\0 \dc;
\draw (0,-3.464)\ub\da\ub\da\ub\da\ub;
\draw (0,-5.192)\db\ua\db\ua\db\ua\db;
\draw (0,-6.928)\uc\0 \uc\0 \uc\0 \uc;
%\draw (0,-8.660);
\end{tikzpicture}$$
$$\input{12-201-102e}\qquad\input{12-201-102f}$$

The case {\boldmath${[12{-}201{-}111{-}...]}$} is not feasible, by Lemma~\ref{l:111-102}.

The case {\boldmath${[12{-}201{-}12]}$} will be divided into two subcases:
(a) there is four color-$2$ nodes that belong to the same length-$6$ cycle (hexagon); 
obviously, the remaining two nodes of the cycle have the same color; 
(b) there are no four nodes belonging to the same hexagon. In both cases,
the coloring is unique, up to equivalence.
$$%
\begin{tikzpicture}[scale=0.65, rotate=0]
\draw (0,-1.732) \0  \db1\ua2\da2\ub3\da4\ua5\db6\0 ;
\draw (0,-3.464) \dc0\uc0\dc0\ub1\da2\ub3\dc5\uc6\dc7;
\draw (0,-5.192) \uc0\dc0\uc0\db1\ua2\db3\uc5\dc6\uc7;
\draw (0,-6.928) \0  \ub1\da2\ua2\db3\ua4\da5\ub6\0 ;
%\draw (0,-8.660)
\end{tikzpicture}\qquad%
\begin{tikzpicture}[scale=0.65, rotate=-90]
\draw (0, 1.732)\da3\ub4\da3\ub4\da3\ub4\da3;
\draw (0,-0.000)\ua2\db1\ua2\db1\ua2\db1\ua2;
\draw (0,-1.732)\dc0\uc0\dc0\uc0\dc0\uc0\dc0;
\draw (0,-3.464)\ub1\da2\ub1\da2\ub1\da2\ub1;
\draw (0,-5.192)\db4\ua3\db4\ua3\db4\ua3\db4;
\draw (0,-6.928)\uc5\dc6\uc5\dc6\uc5\dc6\uc5;
%\draw (0,-8.660);
\end{tikzpicture}$$

The case {\boldmath${[12{-}201{-}201{-}...]}$} leads to the array $[12{-}201{-}201{-}30]$
$$%
\begin{tikzpicture}[scale=0.65, rotate=0]
\draw (0,-3.464)\db3 \uc3 \dd1 \uc3 \db3;
\draw (0,-5.192)\ua2 \db1 \uc0 \db1 \ua2;
\end{tikzpicture}$$
which is symmetrical to the unfeasible array $[03{-}102{-}102{-}21]$.

The arrays symmetrical to {\boldmath${[12{-}201{-}21]}$} 
and {\boldmath${[12{-}201{-}30]}$} were considered above
and proven to be unfeasible.

The colorings with array {\boldmath${[12{-}21]}$}
are in one-to-one correspondence (inverting the color of every odd node) with the colorings with array
{${[21{-}12]}$}.
Keeping in mind this correspondence, 
it is easy to see that the statement about ${[12{-}21]}$
follows from the statement about ${[21{-}12]}$, 
which will be considered deeply below.

The array symmetrical to {\boldmath${[12{-}30]}$} was proven to be unfeasible.

It remains to consider the last possibility for the values $a_0$, $b_0$. 
Namely, {${[21{-}...]}$}. 
 We note, that we are only interested in the arrays that end with ${-}12]$, as all the other possibilities
 are considered above, up to the symmetry.
 The nodes of color $0$ generate a regular subgraph of degree $2$, 
 i.e., the union of infinite chains and cycles
 (keeping in mind the argument above, 
 we can state the same for the nodes colored by the last color).
We start with considering a partial subcase: 
assume that there is an infinite chain from color-$0$ nodes 
such that no four of these nodes belong to the same hexagon. 
Using arguments as above (case $[12{-}201{-}102{-}...]$), 
we can see that all the coloring is reconstructed, 
and the array has form $[21{-}102{-}201{-}...{-}201{-}12]$
(if the number of colors is even, including the case $[21{-}12]$ of two colors) 
or $[21{-}102{-}201{-}...{-}102{-}21]$ (if it is odd).
Remind that we are not interested in the latter.
$$\def\ut{ ++(1, 0.29) node  {\raisebox{-0.25em}[0.2em][0em]{\makebox[0mm][c]{$\cdots$}}}  ++(0,-0.29) }
\def\dt{ ++(1,-0.29) node  {\raisebox{-0.25em}[0.2em][0em]{\makebox[0mm][c]{$\cdots$}}}  ++(0, 0.29) }
\def\uo{\ucolor{white}{$\ $}}
\def\do{\dcolor{white}{$\ $}}
\def\ua{\ucolor{yellow!20!white}{$0$}}
\def\da{\dcolor{yellow!20!white}{$0$}}
\def\ub{\ucolor{yellow!60!white}{$1$}}
\def\db{\dcolor{yellow!60!white}{$1$}}
\def\0{++(1,0)}
\def\uc{\ucolor{orange!65!white}{$2$}}
\def\dc{\dcolor{orange!65!white}{$2$}}
\def\ud{\ucolor{red!85!brown!70!white}{$3$}}
\def\dd{\dcolor{red!85!brown!70!white}{$3$}}
\def\ue{\ucolor{brown!80!black}{\ }}
\def\de{\dcolor{brown!80!black}{\ }}
\def\uf{\ucolor{black}{\ }}
\def\df{\dcolor{black}{\ }}
\begin{tikzpicture}[scale=0.65, rotate=-90]
%\draw (0, 8.660)\uo \do \uo \do \uo \do \uo \do \uo \do \uo ;
%\draw (0, 6.928)\do \uo \do \uo \do \uo \do \uo \do ;
%\draw (0, 5.192)\uo \do \uo \do \uo \do \uo \do \uo \do ;
\draw (0,-1.732)\da\ua\da\ua\da\ua\da;
\draw (0,-3.464)\ub\db\ub\db\ub\db\ub;
\draw (0,-5.192)\da\ua\da\ua\da\ua\da;
\draw (0,-6.928)\ub\0 \ub\0 \ub\0 \ub;
%\draw (0,-8.660)\uo \do \uo \do \uo \do \uo \do \uo \do \uo ;
\end{tikzpicture}\qquad\input{21-102-201a}\qquad \input{21-102-201b}$$

So, we can assume that the coloring contains 
a chain of four color-$0$ vertices 
from the same hexagon
(and the same is true for the last color):
$$\raisebox{-2em}{%
\begin{tikzpicture}[scale=0.65, rotate=0]
\draw (0,-3.464)\da \ua \do ;
\draw (0,-5.192)\ua \da \uo ;
\end{tikzpicture}}:\qquad\mbox{(a)}\raisebox{-2em}{%
\begin{tikzpicture}[scale=0.65, rotate=0]
\draw (0,-3.464)\da \ua \db ;
\draw (0,-5.192)\ua \da \ub ;
\end{tikzpicture}}, \quad\mbox{or (b)} \raisebox{-2em}{%
\begin{tikzpicture}[scale=0.65, rotate=0]
\draw (0,-3.464)\da \ua \da ;
\draw (0,-5.192)\ua \da \ub ;
\end{tikzpicture}}, \quad\mbox{or (c)} \raisebox{-2em}{%
\begin{tikzpicture}[scale=0.65, rotate=0]
\draw (0,-3.464)\da \ua \da ;
\draw (0,-5.192)\ua \da \ua ;
\end{tikzpicture}}$$

The first subcase of ${[21{-}...]}$ is {\boldmath${[21{-}102{-}...]}$}.
We see that (a) and (b) are inadmissible, as $a_1<1$ and $c_1<2$ in the considered prefix.
Also, we can easily find that $a_2>0$:
$$\def\ut{ ++(1, 0.29) node  {\raisebox{-0.25em}[0.2em][0em]{\makebox[0mm][c]{$\cdots$}}}  ++(0,-0.29) }
\def\dt{ ++(1,-0.29) node  {\raisebox{-0.25em}[0.2em][0em]{\makebox[0mm][c]{$\cdots$}}}  ++(0, 0.29) }
\def\uo{\ucolor{white}{$\ $}}
\def\do{\dcolor{white}{$\ $}}
\def\ux#1{\ucolor{white}{$#1$}}
\def\dx#1{\dcolor{white}{$#1$}}
\def\ua#1{\ucolor{yellow!20!white}{$0_{\mathrm{#1}}$}}
\def\da#1{\dcolor{yellow!20!white}{$0_{\mathrm{#1}}$}}
\def\ub#1{\ucolor{yellow!60!white}{$1_{\mathrm{#1}}$}}
\def\db#1{\dcolor{yellow!60!white}{$1_{\mathrm{#1}}$}}
\def\uc#1{\ucolor{orange!65!white}{$2_{\mathrm{#1}}$}}
\def\dc#1{\dcolor{orange!65!white}{$2_{\mathrm{#1}}$}}
\begin{tikzpicture}[scale=0.65, rotate=0]
\draw (0,-3.464)\do \ua0 \da0 \ub1 \dc2;
\draw (0,-5.192)\uo \da0 \ua0 \db1 \uc2;
\end{tikzpicture}$$
Further, all possibilities are divided into the following four subcases
(excluding the arrays that are not finishing with ${-}12$):

$[21{-}102{-}111{-}102{-}...]$ is not feasible by Lemma~\ref{l:111-102};
$[21{-}102{-}111{-}111{-}...]$ and $[21{-}102{-}111{-}12]$ are not feasible (see the figures below);
$[21{-}102{-}12]$ corresponds to a unique coloring, up to equivalence:
$$\def\uo{\ucolor{white}{$\ $}}
\def\do{\dcolor{white}{$\ $}}
\def\ux#1{\ucolor{white}{$#1$}}
\def\dx#1{\dcolor{white}{$#1$}}
\def\ua#1{\ucolor{yellow!20!white}{$0_{\mathrm{#1}}$}}
\def\da#1{\dcolor{yellow!20!white}{$0_{\mathrm{#1}}$}}
\def\ub#1{\ucolor{yellow!60!white}{$1_{\mathrm{#1}}$}}
\def\db#1{\dcolor{yellow!60!white}{$1_{\mathrm{#1}}$}}
\def\0{++(1,0)}
\def\uc#1{\ucolor{orange!65!white}{$2_{\mathrm{#1}}$}}
\def\dc#1{\dcolor{orange!65!white}{$2_{\mathrm{#1}}$}}
\def\ud#1{\ucolor{red!85!brown!70!white}{$3_{\mathrm{#1}}$}}
\def\dd#1{\dcolor{red!85!brown!70!white}{$3_{\mathrm{#1}}$}}
\def\ue#1{\ucolor{brown!80!black}{$4_{\mathrm{#1}}$}}
\def\de#1{\dcolor{brown!80!black}{$4_{\mathrm{#1}}$}}
\begin{tikzpicture}[scale=0.65, rotate=0]
\draw (0,-0.000)\0  \db1\uc2\dc2\ud3\de4;
\draw (0,-1.732)\da0\ua0\da0\ub1\dc2\ud3\dcolor{white}{\bf?};
\draw (0,-3.464)\ua0\da0\ua0\db1\uc2\dd3\ud5;
\draw (0,-5.192)\0  \ub1\dc2\uc2\dd3\ue4;
\end{tikzpicture}\qquad\def\ua#1{\ucolor{yellow!20!white}{$0_{\mathrm{#1}}$}}
\def\da#1{\dcolor{yellow!20!white}{$0_{\mathrm{#1}}$}}
\def\ub#1{\ucolor{yellow!60!white}{$1_{\mathrm{#1}}$}}
\def\db#1{\dcolor{yellow!60!white}{$1_{\mathrm{#1}}$}}
\def\uc#1{\ucolor{orange!65!white}{$2_{\mathrm{#1}}$}}
\def\dc#1{\dcolor{orange!65!white}{$2_{\mathrm{#1}}$}}
\def\ud#1{\ucolor{red!85!brown!70!white}{$3_{\mathrm{#1}}$}}
\def\dd#1{\dcolor{red!85!brown!70!white}{$3_{\mathrm{#1}}$}}
\begin{tikzpicture}[scale=0.65, rotate=0]
\draw (0,-3.464) \ud0\dd0\ub1\dc2\ud3\dcolor{white}{\bf?};
\draw (0,-5.192) \dd0\ud0\db1\ub2\da3\ub4;
\end{tikzpicture}\qquad\def\ut{ ++(1, 0.29) node  {\raisebox{-0.25em}[0.2em][0em]{\makebox[0mm][c]{$\cdots$}}}  ++(0,-0.29) }
\def\dt{ ++(1,-0.29) node  {\raisebox{-0.25em}[0.2em][0em]{\makebox[0mm][c]{$\cdots$}}}  ++(0, 0.29) }
\def\ua#1{\ucolor{yellow!20!white}{$0_{\mathrm{#1}}$}}
\def\da#1{\dcolor{yellow!20!white}{$0_{\mathrm{#1}}$}}
\def\uA{\ucolor{yellow!20!white}{$0$}}
\def\dA{\dcolor{yellow!20!white}{$0$}}
\def\ub#1{\ucolor{yellow!60!white}{$1_{\mathrm{#1}}$}}
\def\db#1{\dcolor{yellow!60!white}{$1_{\mathrm{#1}}$}}
\def\uB{\ucolor{yellow!60!white}{$1$}}
\def\dB{\dcolor{yellow!60!white}{$1$}}
\def\0{++(1,0)}
\def\uc#1{\ucolor{orange!65!white}{$2_{\mathrm{#1}}$}}
\def\dc#1{\dcolor{orange!65!white}{$2_{\mathrm{#1}}$}}
\def\uC{\ucolor{orange!65!white}{$2$}}
\def\dC{\dcolor{orange!65!white}{$2$}}
\begin{tikzpicture}[scale=0.65, rotate=0]
%\draw (0, 8.660)\uo \do \uo \do \uo \do \uo \do \uo \do \uo ;
%\draw (0, 6.928)\do \uo \do \uo \do \uo \do \uo \do ;
%\draw (0, 5.192)\uo \do \uo \do \uo \do \uo \do \uo \do ;
\draw (0, 3.464)\da A\uA \da A\uB  \dC  \uC \dC \uB \dC\uC\dC\uB\dA ;
\draw (0, 1.732)\ua9 \da8\ua9 \db A\uC  \dC \uC \dB \uC\dC\uC\dB\uA ;
\draw (0,-0.000)\dc8 \ub7\dc5 \uc6 \dc B\uB \dA \uA \dA\uB\dC\uC\dC ;
\draw (0,-1.732)\uc7 \db4\uc0 \dc0 \uc0 \dB \uA \dA \uA\dB\uC\dC\uC ;
\draw (0,-3.464)\dA  \ua3\da2 \ub1 \dC  \uC \dC \uB \dC\uC\dC\uB\dA ;
%\draw (0,-5.192)\uo \do \uo \do \uo \do \uo \do \uo \do ;
%\draw (0,-6.928)\do \uo \do \uo \do \uo \do \uo \do \uo \do ;
%\draw (0,-8.660)\uo \do \uo \do \uo \do \uo \do \uo \do \uo ;
\end{tikzpicture}$$
(on the step 6 of the last consideration, we use the symmetry and the obvious fact that the two remaining 
uncolored nodes in the corresponding hexagon cannot have color $1$ simultaneously, as $a_1=0$).

The next subcase of ${[21{-}...]}$ is {\boldmath${[21{-}111{-}...]}$}.
The following two pictures demon\-strate that two special kind of chains 
from color-$0$ nodes lead to arrays that do not finish with ${-}12$:
$$\def\uo{\ucolor{white}{$\ $}}
\def\do{\dcolor{white}{$\ $}}
\def\ux#1{\ucolor{white}{$#1$}}
\def\dx#1{\dcolor{white}{$#1$}}
\def\ua#1{\ucolor{yellow!20!white}{$0_{\mathrm{#1}}$}}
\def\da#1{\dcolor{yellow!20!white}{$0_{\mathrm{#1}}$}}
\def\ub#1{\ucolor{yellow!60!white}{$1_{\mathrm{#1}}$}}
\def\db#1{\dcolor{yellow!60!white}{$1_{\mathrm{#1}}$}}
\def\0{++(1,0)}
\def\dc#1{\dcolor{orange!65!white}{$2_{\mathrm{#1}}$}}
\begin{tikzpicture}[scale=0.65, rotate=0]
\draw (0, 1.732)\0  \0  \0  \da4\ua5\do;
\draw (0,-0.000)\0  \0  \db1\ub2\dc3\ucolor{white}{\bf?};
\draw (0,-1.732)\ub1\da0\ua0\da0\ub1 ;
\draw (0,-3.464)\0  \ua0\0  \ua0;
\end{tikzpicture}\qquad\def\uo{\ucolor{white}{$\ $}}
\def\do{\dcolor{white}{$\ $}}
\def\ua#1{\ucolor{yellow!20!white}{$0_{\mathrm{#1}}$}}
\def\da#1{\dcolor{yellow!20!white}{$0_{\mathrm{#1}}$}}
\def\ub#1{\ucolor{yellow!60!white}{$1_{\mathrm{#1}}$}}
\def\db#1{\dcolor{yellow!60!white}{$1_{\mathrm{#1}}$}}
\def\0{++(1,0)}
\def\uc#1{\ucolor{orange!65!white}{$2_{\mathrm{#1}}$}}
\def\dc#1{\dcolor{orange!65!white}{$2_{\mathrm{#1}}$}}
\begin{tikzpicture}[scale=0.65, rotate=0]
\draw (0,-0.000) \0  \da5\uo \dcolor{white}{\bf?};
\draw (0,-1.732) \dc4\ub3\db1\uc2\db1;
\draw (0,-3.464) \ub1\da0\ua0\da0\ua0\da0;
\draw (0,-5.192) \0  \ua0;
\end{tikzpicture}$$
In particular, this means that four consequent color-$0$ nodes in the same hexagon
are uniquely continued to an infinite color-$0$ chain, like in the figure below. 
Arguing as in the case $[12{-}111{-}111{-}...]$, 
we get a new series of parameters $[21{-}111{-}...{-}111{-}12]$: 
$$\def\ut{ ++(1, 0.29) node  {\raisebox{-0.25em}[0.2em][0em]{\makebox[0mm][c]{$\cdots$}}}  ++(0,-0.29) }%
\def\dt{ ++(1,-0.29) node  {\raisebox{-0.25em}[0.2em][0em]{\makebox[0mm][c]{$\cdots$}}}  ++(0, 0.29) }%
\def\uo{\ucolor{white}{$\ $}}%
\def\do{\dcolor{white}{$\ $}}%
\def\ux{\ucolor{white}{$$}}%
\def\dx{\dcolor{white}{$$}}%
\def\ua{\ucolor{yellow!20!white}{$0$}}%
\def\da{\dcolor{yellow!20!white}{$0$}}%
\def\ub{\ucolor{yellow!60!white}{$1$}}%
\def\db{\dcolor{yellow!60!white}{$1$}}%
\def\0{++(1,0)}%
\def\uc{\ucolor{orange!65!white}{$2$}}%
\def\dc{\dcolor{orange!65!white}{$2$}}%
\def\ud{\ucolor{red!85!brown!70!white}{$3$}}%
\def\dd{\dcolor{red!85!brown!70!white}{$3$}}%
\def\ue{\ucolor{brown!80!black}{$\ $}}%
\def\de{\dcolor{brown!80!black}{$\ $}}%
\def\uf{\ucolor{black}{$\ $}}%
\def\df{\dcolor{black}{$\ $}}%
\begin{tikzpicture}[scale=0.65, rotate=-90]
%\draw (0, 8.660) \dc\ub\da\ub\dc\dt\ue\df\uf\de;
%\draw (0, 6.928) \uc\db\ua\db\uc\0\de\uf\df\ue;
\draw (0, 5.192) \db\ua\da\ub\dc\dt\ue\df\uf\de;
\draw (0, 3.464) \ub\da\ua\db\uc\0 \de\uf\df\ue;
\draw (0, 1.732) \db\ua\da\ub\dc\dt\ue\df\uf\de;
\draw (0,-0.000) \ub\da\ua\db\uc\0 \de\uf\df\ue;
\draw (0,-1.732) \db\ua\da\ub\dc\dt\ue\df\uf\de;
\draw (0,-3.464) \ub\da\ua\db\uc\0 \de\uf\df\ue;
\draw (0,-5.192) \db\ua\da\ub\dc\dt\ue\df\uf\de;
\draw (0,-6.928) \ub\da\ua\db\uc\0 \de\uf\df\ue;
\draw (0,-8.660) \db\ua\da\ub\dc\dt\ue\df\uf\de;
\end{tikzpicture}\qquad\input{21-111-b}$$

{\boldmath${[21{-}12]}$}.
We first consider the subcase (c), when there is a hexagon colored into one color, say $0$.
The rest of the coloring is colored uniquely, namely, 
the color of a node equals $\lfloor\frac{d+1}2\rfloor\bmod2$ 
where $d$ is the distance from the node
to the origin hexagon. 
By inverting the color of the odd vertices, 
we get the corresponding coloring with the array ${[12{-}21]}$.
\begin{equation}\label{eq:radial}
 \raisebox{-2em}{\input{21-12a}\qquad\input{12-21a}}
\end{equation}
We also note that in the last case, 
there is a triple of color-$0$
at distance $2$ from each other; 
so, the set of color-$0$ nodes cannot be split to form a coloring with 
the intersection array $[12{-}111{-}21]$ (the same is true for the color-$1$ nodes).

It remains to consider the subcase (a), assuming that no hexagon 
colored by one color. Let $o$, $x$, $y$, and $xz$ be colored by $0$. Then $yz$, $yzx$ are colored by $1$ (otherwize we have a hexagon colored by $0$); hence the nodes $yzy$ and $yzxz$ have the same color. By induction, the nodes
$(yz)^i$, $(yz)^ix$, $(yz)^iy$, $(yz)^ixz$ are colored by $i\bmod 2$, $i=0,1,2,...$. The same holds for the nodes
$(zy)^i$, $(zy)^ix$, $(zy)^iy$, $(zy)^ixz$.

\noindent\mbox{}\hspace{-6ex}$\raisebox{-2em}{\def\ua#1{\ucolor{yellow!60!white}{$0_{\mathrm{#1}}$}}
\def\da#1{\dcolor{yellow!60!white}{$0_{\mathrm{#1}}$}}
\def\db#1{\dcolor{orange!65!white}{$1_{\mathrm{#1}}$}}
\def\ub#1{\ucolor{orange!65!white}{$1_{\mathrm{#1}}$}}
\def\uO{\ucolor{white}{$ $}}
\def\dO{\dcolor{white}{$ $}}
\def\uA{\ucolor{yellow!60!white}{$0$}}
\def\dA{\dcolor{yellow!60!white}{$0$}}
\def\uB{\ucolor{orange!65!white}{$1$}}
\def\dB{\dcolor{orange!65!white}{$1$}}
\def\dAA{\dcolor{white}{$\boldsymbol a$}}
\begin{tikzpicture}[scale=0.65, rotate=90]
%\draw (0, 8.660); \draw (0, 6.928); \draw (0, 5.192); \draw (0, 3.464);
\draw (0, 1.732)\uO  \db6\uO  \da4 \uO  \db2\uO \da1\uO \db3\uO \da5\uO ;
\draw (0,-0.000)\db6 \ub5\da4 \ua3 \db2 \ub1\da0\ua0\db0\ub1\da2\ua3\db4;
\draw (0,-1.732)\ub6 \db5\ua4 \da3 \ub2 \db1\ua0\da0\ub0\db1\ua2\da3\ub4;
\draw (0,-3.464)\dO  \ub6\dO  \ua4 \dO  \ub2\dAA\ua1\dO \ub3\dO \ua5\dO ;
%\draw (0,-5.192); \draw (0,-5.192); \draw (0,-6.928); \draw (0,-8.660);
\end{tikzpicture}}\  \boldsymbol a=0: \raisebox{-2em}{\input{21-12a0}}\  \boldsymbol a=1: \raisebox{-2em}{\input{21-12a1}}$

We state that for every node $v$, the node $vx$ has the same color, while the node $vyz$ has the different color.
We already know that this holds for the nodes of the set
$I_o$, where $I_v=\{v(yz)^i,v(yz)^ix,v(zy)^i,v(zy)^ix : i=0,1,2,...\}$. In general, this statement is easy to prove by induction on the distance from $I_o$, see the figures above (subcases $\boldsymbol a=0$ and $\boldsymbol a=1$) for the example of the induction step.

As a consequence, we see that the colors of all nodes of the set
$I_v$ for some $v$ is uniquely determined by the color of any node from this set; so, there is only one way to color $I_v$, up to the inversion of the coloring.
Finaly, we find that any coloring with array $[21{-}12]$
either have a hexagon colored into one color (in which case the coloring is unique, up to equivalence and inversion of the colors, see (\ref{eq:radial})) or is obtained from the coloring
$$\input{21-12c}$$
by inversion of the colors of some sets $I_v$ (and may be by rotation). For example:
$$\input{21-12d}$$
Since there are infinite number of such sets, the number of the colorings is infinite.

The same conclusion can be done for the linked array
$[12{-}21]$: every coloring is either equivalent to the coloring (\ref{eq:radial}) or obtained from the coloring
\begin{equation}\label{eq:1221}
\raisebox{-2em}{\input{12-21c}}
\end{equation}
by inversion of the colors of some sets $I_v$. For example:
$$\input{12-21d}$$
The number of the colorings is infinite.

Another linked array is $[12{-}111{-}21]$. 
It is easy to see that the color $0$ of every coloring with array $[12{-}21]$ except (\ref{eq:radial}) can be split into two colors to form a coloring with array $[12{-}111{-}21]$. However, there are two different subcases. If the coloring contains the fragment
$$\def\ud{\ucolor{red!85!brown!70!white}{$2$}}
\def\dd{\dcolor{red!85!brown!70!white}{$2$}}
\def\ua{\ucolor{yellow!60!white}{$0$}}
\def\da{\dcolor{yellow!60!white}{$0$}}
\def\Ua{\ucolor{yellow!60!white}{ }}
\def\Da{\dcolor{yellow!60!white}{ }}
\def\uA{\ucolor{yellow!60!white}{$\boldsymbol b$}}
\def\dA{\dcolor{yellow!60!white}{$\boldsymbol b$}}
\def\ub{\ucolor{orange!65!white}{$1$}}
\def\db{\dcolor{orange!65!white}{$1$}}
\def\uc{\ucolor{yellow!20!white}{$0$}}
\def\dc{\dcolor{yellow!20!white}{$0$}}
\def\Uc{\ucolor{yellow!20!white}{ }}
\def\Dc{\dcolor{yellow!20!white}{ }}
\def\dC{\dcolor{yellow!20!white}{$\boldsymbol a$}}
\def\uC{\ucolor{yellow!20!white}{$\boldsymbol a$}}
\def\uO{\ucolor{white}{$ $}}
\def\dO{\dcolor{white}{$ $}}
\begin{tikzpicture}[scale=0.65, rotate=90]
\draw (0, 26.928)\db\uc\dc\ub\db\uc\dc\ub\db; 
\draw (0, 25.192)\ua\db\ub\da\ua\db\ub\da\ua; 
\draw (0, 16.928)\db\Uc\dC\ub\db\uC\Dc\ub\db; 
\draw (0, 15.192)\Ua\db\ub\dA\uA\db\ub\Da\Ua; 
\draw (4,  11   ) node {$\Longrightarrow$}; 
\draw (0,  6.928)\db\ud\dd\ub\db\ud\dd\ub\db; 
\draw (0,  5.192)\ua\db\ub\da\ua\db\ub\da\ua; 
\end{tikzpicture}$$
then the splitting is unique 
(we see that the nodes marked by $\boldsymbol a$ 
must have the same color in the new coloring;
hence, all ``left'' $0$s of the fragment will have the same color; in the rest of the coloring, $0$s are split uniquely):
$$\input{12-111-21d}$$
The only coloring with array $[12{-}21]$ avoiding such fragment is (\ref{eq:1221}).
We see that for every set 
$\{v(xz)^i,v(xz)^iy : i=0,\pm 1,\pm 2,...\}$, where
$v$ is colored by $0$, we can independently choose 
one of two variants to color half of the $0$s into new color $2$:
$$\input{12-111-21e}$$
This gives another infinite set of colorings with array $[12{-}111{-}21]$. 

In the case {\boldmath${[21{-}201{-}...]}$}, we see that the number of colors is smaller than $4$; 
$$%
\begin{tikzpicture}[scale=0.65, rotate=0]
\draw (0,-0.000)\da2\ua1\db0\ua1\da2;
\draw (0,-1.732)\uq\dd3\uc1;
\end{tikzpicture}$$
but the three arrays with $3$ colors were considered above, up to the symmetry. 
The same is true for the remaining subcases {\boldmath${[21{-}21]}$} and {\boldmath${[21{-}30]}$}.

\renewcommand\appendixname{Appendix}
\appendix
\renewcommand\appendixname{Appendix}
\section*{Appendices}
In the appendices, we list colorings for all feasible arrays of
distance regular colorings of the infinite hexagonal grid.
If the array corresponds to a finite number of colorings, 
we show all nonequivalent ones.
If the number of colorings is infinite, 
each class obtained from one coloring in accordance with Lemma~\ref{l:infin}
is illustrated by only one picture; in this case, the set $X$ is emphasized by the bold cycles.
The small monochromatic pictures illustrate corresponding completely regular codes. 
If the two codes corresponding to the same coloring are equivalent, we picture only one of them.
\section{The colorings: two colors}   

\noindent\rotatebox{90}{$[03{-}30]$:}
\input{03-30fin}
\parbox[b]{1cm}{\def\uclr#1{ ++(1, 0.29) node [thin,circle,draw=black, fill=#1, thin, minimum size=6.5, inner sep=0] {}  ++(0,-0.29) }
\def\dclr#1{ ++(1,-0.29) node [thin,circle,draw=black, fill=#1, thin, minimum size=6.5, inner sep=0] {}  ++(0, 0.29) }
\def\ua{\uclr{white!90!black}}
\def\da{\dclr{white!90!black}}
\def\ub{\uclr{white!90!black}}
\def\db{\dclr{white!90!black}}
\def\uc{\uclr{white!90!black}}
\def\dc{\dclr{white!90!black}}
\def\ud{\uclr{white!90!black}}
\def\dd{\dclr{white!90!black}}
\def\ue{\uclr{white!90!black}}
\def\de{\dclr{white!90!black}}
\def\ua{\uclr{black}}
\def\da{\dclr{black}}
\def\Da{\da}
\def\Db{\db}
\def\Ua{\ua}
\def\Ub{\ub}
\def\Dc{\dc}
\def\Uc{\uc}
\def\Dd{\dd}
\def\Ud{\ud}
\begin{tikzpicture}[scale=0.325, rotate=-90]
\abcabcabc
\end{tikzpicture}
}

\noindent\rotatebox{90}{\noindent$[03{-}12]$:} 
\input{03-12fin}
\parbox[b]{1cm}{\\ \def\uclr#1{ ++(1, 0.29) node [thin,circle,draw=black, fill=#1, thin, minimum size=6.5, inner sep=0] {}  ++(0,-0.29) }
\def\dclr#1{ ++(1,-0.29) node [thin,circle,draw=black, fill=#1, thin, minimum size=6.5, inner sep=0] {}  ++(0, 0.29) }
\def\ua{\uclr{white!90!black}}
\def\da{\dclr{white!90!black}}
\def\ub{\uclr{white!90!black}}
\def\db{\dclr{white!90!black}}
\def\uc{\uclr{white!90!black}}
\def\dc{\dclr{white!90!black}}
\def\ud{\uclr{white!90!black}}
\def\dd{\dclr{white!90!black}}
\def\ue{\uclr{white!90!black}}
\def\de{\dclr{white!90!black}}
\def\ub{\uclr{black}}
\def\db{\dclr{black}}
\def\Da{\da}
\def\Db{\db}
\def\Ua{\ua}
\def\Ub{\ub}
\def\Dc{\dc}
\def\Uc{\uc}
\begin{tikzpicture}[scale=0.325, rotate=-90]
\abcabcabc
\end{tikzpicture}
}
   
\noindent\rotatebox{90}{\noindent$[12{-}21]$ (I):}
\input{12-21fin}
\parbox[b]{1cm}{}

\noindent\rotatebox{90}{\noindent$[12{-}21]$ (II):}
\input{12-21fin9}
\parbox[b]{1cm}{}

\noindent\rotatebox{90}{$[12{-}12]$:} 
\input{12-12fin}
\parbox[b]{1cm}{\\ }
   
\noindent\rotatebox{90}{\noindent$[21{-}12]$ (I):}
\input{21-12fin}
\parbox[b]{1cm}{\\ }

\noindent\rotatebox{90}{\noindent$[21{-}12]$ (II):}
\input{21-12fin9}
\parbox[b]{1cm}{}

\section{The colorings: 6 infinite classes of arrays}   

\noindent
\rotatebox{90}{$[12{-}111{-}...{-}111{-}12]$:}
\rotatebox{90}{$k=3,5,7,...$:}
\def\ut{ ++(1, 0.29) node  {\raisebox{-0.25em}[0.2em][0em]{\makebox[0mm][c]{$\cdots$}}}  ++(0,-0.29) }%
\def\dt{ ++(1,-0.29) node  {\raisebox{-0.25em}[0.2em][0em]{\makebox[0mm][c]{$\cdots$}}}  ++(0, 0.29) }%
\def\ua{\ucolor{yellow!20!white}{$0$}}%
\def\da{\dcolor{yellow!20!white}{$0$}}%
\def\ub{\ucolor{yellow!60!white}{$1$}}%
\def\db{\dcolor{yellow!60!white}{$1$}}%
\def\0{++(1,0)}%
\def\uc{\ucolor{orange!65!white}{$2$}}%
\def\dc{\dcolor{orange!65!white}{$2$}}%
\def\ud{\ucolor{red!85!brown!70!white}{$3$}}%
\def\dd{\dcolor{red!85!brown!70!white}{$3$}}%
\def\ue{\ucolor{brown!80!black}{$\ $}}%
\def\de{\dcolor{brown!80!black}{$\ $}}%
\def\uf{\ucolor{black}{$\ $}}%
\def\df{\dcolor{black}{$\ $}}%
\begin{tikzpicture}[scale=0.65, rotate=-90]
\draw (0, 5.192) \dc\ub\da\ub\dc\dt\ue\df\uf\de\0 \uc\db\ua\db\uc;
\draw (0, 3.464) \uc\db\ua\db\uc\0 \de\uf\df\ue\dt\dc\ub\da\ub\dc;
\draw (0, 1.732) \dc\ub\da\ub\dc\dt\ue\df\uf\de\0 \uc\db\ua\db\uc;
\draw (0,-0.000) \uc\db\ua\db\uc\0 \de\uf\df\ue\dt\dc\ub\da\ub\dc;
\draw (0,-1.732) \dc\ub\da\ub\dc\dt\ue\df\uf\de\0 \uc\db\ua\db\uc;
\draw (0,-3.464) \uc\db\ua\db\uc\0 \de\uf\df\ue\dt\dc\ub\da\ub\dc;
\draw (0,-5.192) \dc\ub\da\ub\dc\dt\ue\df\uf\de\0 \uc\db\ua\db\uc;
\draw (0,-6.928) \uc\db\ua\db\uc\0 \de\uf\df\ue\dt\dc\ub\da\ub\dc;
\draw (0,-8.660) \dc\ub\da\ub\dc\dt\ue\df\uf\de\0 \uc\db\ua\db\uc;
\end{tikzpicture}\ 
\rotatebox{90}{$k=4,6,8,...$:}
\def\ut{ ++(1, 0.29) node  {\raisebox{-0.25em}[0.2em][0em]{\makebox[0mm][c]{$\cdots$}}}  ++(0,-0.29) }%
\def\dt{ ++(1,-0.29) node  {\raisebox{-0.25em}[0.2em][0em]{\makebox[0mm][c]{$\cdots$}}}  ++(0, 0.29) }%
\def\uo{\ucolor{white}{$\ $}}%
\def\do{\dcolor{white}{$\ $}}%
\def\ux{\ucolor{white}{$$}}%
\def\dx{\dcolor{white}{$$}}%
\def\ua{\ucolor{yellow!20!white}{$0$}}%
\def\da{\dcolor{yellow!20!white}{$0$}}%
\def\ub{\ucolor{yellow!60!white}{$1$}}%
\def\db{\dcolor{yellow!60!white}{$1$}}%
\def\0{++(1,0)}%
\def\uc{\ucolor{orange!65!white}{$2$}}%
\def\dc{\dcolor{orange!65!white}{$2$}}%
\def\ud{\ucolor{red!85!brown!70!white}{$3$}}%
\def\dd{\dcolor{red!85!brown!70!white}{$3$}}%
\def\ue{\ucolor{brown!80!black}{$\ $}}%
\def\de{\dcolor{brown!80!black}{$\ $}}%
\def\uf{\ucolor{black}{$\ $}}%
\def\df{\dcolor{black}{$\ $}}%
\begin{tikzpicture}[scale=0.65, rotate=-90]
%\draw (0, 8.660) \dc\ub\da\ub\dc\dt\de\uf\df\ue;
%\draw (0, 6.928) \uc\db\ua\db\uc\0 \ue\df\uf\de;
\draw (0, 5.192) \dc\ub\da\ub\dc\dt\de\uf\df\ue\0 \uc\db\ua\db\uc;
\draw (0, 3.464) \uc\db\ua\db\uc\0 \ue\df\uf\de\dt\dc\ub\da\ub\dc;
\draw (0, 1.732) \dc\ub\da\ub\dc\dt\de\uf\df\ue\0 \uc\db\ua\db\uc;
\draw (0,-0.000) \uc\db\ua\db\uc\0 \ue\df\uf\de\dt\dc\ub\da\ub\dc;
\draw (0,-1.732) \dc\ub\da\ub\dc\dt\de\uf\df\ue\0 \uc\db\ua\db\uc;
\draw (0,-3.464) \uc\db\ua\db\uc\0 \ue\df\uf\de\dt\dc\ub\da\ub\dc;
\draw (0,-5.192) \dc\ub\da\ub\dc\dt\de\uf\df\ue\0 \uc\db\ua\db\uc;
\draw (0,-6.928) \uc\db\ua\db\uc\0 \ue\df\uf\de\dt\dc\ub\da\ub\dc;
\draw (0,-8.660) \dc\ub\da\ub\dc\dt\de\uf\df\ue\0 \uc\db\ua\db\uc;
\end{tikzpicture}

\noindent
\rotatebox{90}{$[12{-}111{-}...{-}111{-}21]$:}
\rotatebox{90}{$k=3,5,7,...$:}
\def\ut{ ++(1, 0.29) node  {\raisebox{-0.25em}[0.2em][0em]{\makebox[0mm][c]{$\cdots$}}}  ++(0,-0.29) }%
\def\dt{ ++(1,-0.29) node  {\raisebox{-0.25em}[0.2em][0em]{\makebox[0mm][c]{$\cdots$}}}  ++(0, 0.29) }%
\def\ua{\ucolor{yellow!20!white}{$0$}}%
\def\da{\dcolor{yellow!20!white}{$0$}}%
\def\ub{\ucolor{yellow!60!white}{$1$}}%
\def\db{\dcolor{yellow!60!white}{$1$}}%
\def\0{++(1,0)}%
\def\uc{\ucolor{orange!65!white}{$2$}}%
\def\dc{\dcolor{orange!65!white}{$2$}}%
\def\ud{\ucolor{red!85!brown!70!white}{$3$}}%
\def\dd{\dcolor{red!85!brown!70!white}{$3$}}%
\def\ue{\ucolor{brown!80!black}{$\ $}}%
\def\de{\dcolor{brown!80!black}{$\ $}}%
\def\uf{\ucolor{black}{$\ $}}%
\def\df{\dcolor{black}{$\ $}}%
\begin{tikzpicture}[scale=0.65, rotate=-90]
\draw (0, 5.192) \dc\ub\da\ub\dc\dt\ue\df\ue\dt\dc\ub\da\ub\dc;
\draw (0, 3.464) \uc\db\ua\db\uc\0 \de\uf\de\0 \uc\db\ua\db\uc;
\draw (0, 1.732) \dc\ub\da\ub\dc\dt\ue\df\ue\dt\dc\ub\da\ub\dc;
\draw (0,-0.000) \uc\db\ua\db\uc\0 \de\uf\de\0 \uc\db\ua\db\uc;
\draw (0,-1.732) \dc\ub\da\ub\dc\dt\ue\df\ue\dt\dc\ub\da\ub\dc;
\draw (0,-3.464) \uc\db\ua\db\uc\0 \de\uf\de\0 \uc\db\ua\db\uc;
\draw (0,-5.192) \dc\ub\da\ub\dc\dt\ue\df\ue\dt\dc\ub\da\ub\dc;
\draw (0,-6.928) \uc\db\ua\db\uc\0 \de\uf\de\0 \uc\db\ua\db\uc;
\draw (0,-8.660) \dc\ub\da\ub\dc\dt\ue\df\ue\dt\dc\ub\da\ub\dc;
\end{tikzpicture}\ 
\rotatebox{90}{$k=4,6,8,...$:}
\def\ut{ ++(1, 0.29) node  {\raisebox{-0.25em}[0.2em][0em]{\makebox[0mm][c]{$\cdots$}}}  ++(0,-0.29) }%
\def\dt{ ++(1,-0.29) node  {\raisebox{-0.25em}[0.2em][0em]{\makebox[0mm][c]{$\cdots$}}}  ++(0, 0.29) }%
\def\ua{\ucolor{yellow!20!white}{$0$}}%
\def\da{\dcolor{yellow!20!white}{$0$}}%
\def\ub{\ucolor{yellow!60!white}{$1$}}%
\def\db{\dcolor{yellow!60!white}{$1$}}%
\def\0{++(1,0)}%
\def\uc{\ucolor{orange!65!white}{$2$}}%
\def\dc{\dcolor{orange!65!white}{$2$}}%
\def\ud{\ucolor{red!85!brown!70!white}{$3$}}%
\def\dd{\dcolor{red!85!brown!70!white}{$3$}}%
\def\ue{\ucolor{brown!80!black}{$\ $}}%
\def\de{\dcolor{brown!80!black}{$\ $}}%
\def\uf{\ucolor{black}{$\ $}}%
\def\df{\dcolor{black}{$\ $}}%
\begin{tikzpicture}[scale=0.65, rotate=-90]
%\draw (0, 8.660) \dc\ub\da\ub\dc\dt\de\uf\de;
%\draw (0, 6.928) \uc\db\ua\db\uc\0 \ue\df\ue;
\draw (0, 5.192) \dc\ub\da\ub\dc\dt\de\uf\de\dt\dc\ub\da\ub\dc;
\draw (0, 3.464) \uc\db\ua\db\uc\0 \ue\df\ue\0 \uc\db\ua\db\uc;
\draw (0, 1.732) \dc\ub\da\ub\dc\dt\de\uf\de\dt\dc\ub\da\ub\dc;
\draw (0,-0.000) \uc\db\ua\db\uc\0 \ue\df\ue\0 \uc\db\ua\db\uc;
\draw (0,-1.732) \dc\ub\da\ub\dc\dt\de\uf\de\dt\dc\ub\da\ub\dc;
\draw (0,-3.464) \uc\db\ua\db\uc\0 \ue\df\ue\0 \uc\db\ua\db\uc;
\draw (0,-5.192) \dc\ub\da\ub\dc\dt\de\uf\de\dt\dc\ub\da\ub\dc;
\draw (0,-6.928) \uc\db\ua\db\uc\0 \ue\df\ue\0 \uc\db\ua\db\uc;
\draw (0,-8.660) \dc\ub\da\ub\dc\dt\de\uf\de\dt\dc\ub\da\ub\dc;
\end{tikzpicture}

\noindent
\rotatebox{90}{$[12{-}111{-}21]$ (I):}
\input{12-111-21fin}
\parbox[b]{1cm}{}

\noindent
\rotatebox{90}{$[12{-}111{-}21]$ (II):}
\input{12-111-21fin0}
\parbox[b]{1cm}{}

\noindent
\rotatebox{90}{$[21{-}111{-}...{-}111{-}12]$:}
\rotatebox{90}{$k=3,5,7,...$:}
\def\ut{ ++(1, 0.29) node  {\raisebox{-0.25em}[0.2em][0em]{\makebox[0mm][c]{$\cdots$}}}  ++(0,-0.29) }%
\def\dt{ ++(1,-0.29) node  {\raisebox{-0.25em}[0.2em][0em]{\makebox[0mm][c]{$\cdots$}}}  ++(0, 0.29) }%
\def\uo{\ucolor{white}{$\ $}}%
\def\do{\dcolor{white}{$\ $}}%
\def\ux{\ucolor{white}{$$}}%
\def\dx{\dcolor{white}{$$}}%
\def\ua{\ucolor{yellow!20!white}{$0$}}%
\def\da{\dcolor{yellow!20!white}{$0$}}%
\def\ub{\ucolor{yellow!60!white}{$1$}}%
\def\db{\dcolor{yellow!60!white}{$1$}}%
\def\0{++(1,0)}%
\def\uc{\ucolor{orange!65!white}{$2$}}%
\def\dc{\dcolor{orange!65!white}{$2$}}%
\def\ud{\ucolor{red!85!brown!70!white}{$3$}}%
\def\dd{\dcolor{red!85!brown!70!white}{$3$}}%
\def\ue{\ucolor{brown!80!black}{$\ $}}%
\def\de{\dcolor{brown!80!black}{$\ $}}%
\def\uf{\ucolor{black}{$\ $}}%
\def\df{\dcolor{black}{$\ $}}%
\begin{tikzpicture}[scale=0.65, rotate=-90]
\draw (0, 5.192) \db\ua\da\ub\dc\dt\de\uf\df\ue\dt\dc\db\ua\da\ub;
\draw (0, 3.464) \ub\da\ua\db\uc\0 \ue\df\uf\de\0 \uc\ub\da\ua\db;
\draw (0, 1.732) \db\ua\da\ub\dc\dt\de\uf\df\ue\dt\dc\db\ua\da\ub;
\draw (0,-0.000) \ub\da\ua\db\uc\0 \ue\df\uf\de\0 \uc\ub\da\ua\db;
\draw (0,-1.732) \db\ua\da\ub\dc\dt\de\uf\df\ue\dt\dc\db\ua\da\ub;
\draw (0,-3.464) \ub\da\ua\db\uc\0 \ue\df\uf\de\0 \uc\ub\da\ua\db;
\draw (0,-5.192) \db\ua\da\ub\dc\dt\de\uf\df\ue\dt\dc\db\ua\da\ub;
\draw (0,-6.928) \ub\da\ua\db\uc\0 \ue\df\uf\de\0 \uc\ub\da\ua\db;
\draw (0,-8.660) \db\ua\da\ub\dc\dt\de\uf\df\ue\dt\dc\db\ua\da\ub;
\end{tikzpicture}\ 
\rotatebox{90}{$k=4,6,8,...$:}
\def\ut{ ++(1, 0.29) node  {\raisebox{-0.25em}[0.2em][0em]{\makebox[0mm][c]{$\cdots$}}}  ++(0,-0.29) }%
\def\dt{ ++(1,-0.29) node  {\raisebox{-0.25em}[0.2em][0em]{\makebox[0mm][c]{$\cdots$}}}  ++(0, 0.29) }%
\def\uo{\ucolor{white}{$\ $}}%
\def\do{\dcolor{white}{$\ $}}%
\def\ux{\ucolor{white}{$$}}%
\def\dx{\dcolor{white}{$$}}%
\def\ua{\ucolor{yellow!20!white}{$0$}}%
\def\da{\dcolor{yellow!20!white}{$0$}}%
\def\ub{\ucolor{yellow!60!white}{$1$}}%
\def\db{\dcolor{yellow!60!white}{$1$}}%
\def\0{++(1,0)}%
\def\uc{\ucolor{orange!65!white}{$2$}}%
\def\dc{\dcolor{orange!65!white}{$2$}}%
\def\ud{\ucolor{red!85!brown!70!white}{$3$}}%
\def\dd{\dcolor{red!85!brown!70!white}{$3$}}%
\def\ue{\ucolor{brown!80!black}{$\ $}}%
\def\de{\dcolor{brown!80!black}{$\ $}}%
\def\uf{\ucolor{black}{$\ $}}%
\def\df{\dcolor{black}{$\ $}}%
\begin{tikzpicture}[scale=0.65, rotate=-90]
\draw (0, 5.192) \db\ua\da\ub\dc\dt\ue\df\uf\de\dt\dc\db\ua\da\ub;
\draw (0, 3.464) \ub\da\ua\db\uc\0 \de\uf\df\ue\0 \uc\ub\da\ua\db;
\draw (0, 1.732) \db\ua\da\ub\dc\dt\ue\df\uf\de\dt\dc\db\ua\da\ub;
\draw (0,-0.000) \ub\da\ua\db\uc\0 \de\uf\df\ue\0 \uc\ub\da\ua\db;
\draw (0,-1.732) \db\ua\da\ub\dc\dt\ue\df\uf\de\dt\dc\db\ua\da\ub;
\draw (0,-3.464) \ub\da\ua\db\uc\0 \de\uf\df\ue\0 \uc\ub\da\ua\db;
\draw (0,-5.192) \db\ua\da\ub\dc\dt\ue\df\uf\de\dt\dc\db\ua\da\ub;
\draw (0,-6.928) \ub\da\ua\db\uc\0 \de\uf\df\ue\0 \uc\ub\da\ua\db;
\draw (0,-8.660) \db\ua\da\ub\dc\dt\ue\df\uf\de\dt\dc\db\ua\da\ub;
\end{tikzpicture}

\noindent
\rotatebox{90}{$[12{-}201{-}102{-}201{-}...{-}201{-}12]$:}
\rotatebox{90}{$k=3,7,11,...$:} 
\input{12-201--12fin1}
\rotatebox{90}{$k=5,9,13,...$:} 
\input{12-201--12fin2}

\noindent
\rotatebox{90}{$[12{-}201{-}12]$ (II):}
\input{12-201-12fin2}
\parbox[b]{1cm}{\\ \def\uclr#1{ ++(1, 0.29) node [thin,circle,draw=black, fill=#1, thin, minimum size=6.5, inner sep=0] {}  ++(0,-0.29) }
\def\dclr#1{ ++(1,-0.29) node [thin,circle,draw=black, fill=#1, thin, minimum size=6.5, inner sep=0] {}  ++(0, 0.29) }
\def\ua{\uclr{white!90!black}}
\def\da{\dclr{white!90!black}}
\def\ub{\uclr{white!90!black}}
\def\db{\dclr{white!90!black}}
\def\uc{\uclr{white!90!black}}
\def\dc{\dclr{white!90!black}}
\def\ud{\uclr{white!90!black}}
\def\dd{\dclr{white!90!black}}
\def\ue{\uclr{white!90!black}}
\def\de{\dclr{white!90!black}}
\def\uc{\uclr{black}}
\def\dc{\dclr{black}}
\begin{tikzpicture}[scale=0.325, rotate=-90]
\abcabcabc
\end{tikzpicture}
}

\noindent
\rotatebox{90}{$[12{-}201{-}102{-}201{-}...{-}102{-}21]$:}
\rotatebox{90}{$k=4,8,12,...$:} 
\input{12-201--21fin1}
\rotatebox{90}{$k=6,10,14,...$:} 
\input{12-201--21fin2}

\noindent
\rotatebox{90}{$[21{-}102{-}201{-}102{-}...{-}201{-}12]$:}
\rotatebox{90}{$k=4,8,12,...$:} 
\input{21-102--12fin2}
\rotatebox{90}{$k=6,10,14,...$:} 
\input{21-102--12fin1}
 
\section{The colorings: 10 ``sporadic'' arrays} 

\noindent
\rotatebox{90}{$[03{-}102{-}30]$:}
\input{03-102-30fin}
\parbox[b]{1cm}{\\ }

\noindent
\rotatebox{90}{$[03{-}111{-}12]$ (I):}
\input{03-111-12fin0}
\parbox[b]{1cm}{\\ }

\noindent
\rotatebox{90}{$[03{-}111{-}12]$ (II):}
\input{03-111-12fin}
\parbox[b]{1cm}{\\ }

\noindent
\rotatebox{90}{$[12{-}102{-}12]$:}
\input{12-102-12fin}
\parbox[b]{1cm}{\\ }

%------------------------------------
\noindent
\rotatebox{90}{$[21{-}102{-}12]$:}
\input{21-102-12fin}
\parbox[b]{1cm}{\\ }

\noindent
\rotatebox{90}{$[03{-}102{-}102{-}30]$:}
\input{03-102-102-30fin}
\parbox[b]{1cm}{\\ \def\uclr#1{ ++(1, 0.29) node [thin,circle,draw=black, fill=#1, thin, minimum size=6.5, inner sep=0] {}  ++(0,-0.29) }
\def\dclr#1{ ++(1,-0.29) node [thin,circle,draw=black, fill=#1, thin, minimum size=6.5, inner sep=0] {}  ++(0, 0.29) }
\def\ua{\uclr{white!90!black}}
\def\da{\dclr{white!90!black}}
\def\ub{\uclr{white!90!black}}
\def\db{\dclr{white!90!black}}
\def\uc{\uclr{white!90!black}}
\def\dc{\dclr{white!90!black}}
\def\ud{\uclr{white!90!black}}
\def\dd{\dclr{white!90!black}}
\def\ue{\uclr{white!90!black}}
\def\de{\dclr{white!90!black}}
\def\ud{\uclr{black}}
\def\dd{\dclr{black}}
\begin{tikzpicture}[scale=0.325, rotate=-90]
\abcabcabc
\end{tikzpicture}
}

\noindent
\rotatebox{90}{$[03{-}102{-}201{-}30]$:}
\input{03-102-201-30fin}
\parbox[b]{1cm}{}

\noindent
\rotatebox{90}{$[03{-}111{-}111{-}30]$:}
\input{03-111-111-30fin}
\parbox[b]{1cm}{}

\noindent
\rotatebox{90}{$[12{-}102{-}111{-}21]$:}
\input{12-102-111-21fin}
\parbox[b]{1cm}{\\ }

\noindent
\rotatebox{90}{$[03{-}102{-}102{-}201{-}30]$:}
\input{03-102-102-201-30fin}
\parbox[b]{1cm}{\\ \def\uclr#1{ ++(1, 0.29) node [thin,circle,draw=black, fill=#1, thin, minimum size=6.5, inner sep=0] {}  ++(0,-0.29) }
\def\dclr#1{ ++(1,-0.29) node [thin,circle,draw=black, fill=#1, thin, minimum size=6.5, inner sep=0] {}  ++(0, 0.29) }
\def\ua{\uclr{white!90!black}}
\def\da{\dclr{white!90!black}}
\def\ub{\uclr{white!90!black}}
\def\db{\dclr{white!90!black}}
\def\uc{\uclr{white!90!black}}
\def\dc{\dclr{white!90!black}}
\def\ud{\uclr{white!90!black}}
\def\dd{\dclr{white!90!black}}
\def\ue{\uclr{white!90!black}}
\def\de{\dclr{white!90!black}}
\def\ue{\uclr{black}}
\def\de{\dclr{black}}
\begin{tikzpicture}[scale=0.325, rotate=-90]
\abcabcabc
\end{tikzpicture}
}

\noindent
\rotatebox{90}{$[03{-}102{-}111{-}201{-}12]$:}
\input{03-102-111-201-12fin}
\parbox[b]{1cm}{\\ }

%\bibliography{../../k.bib}
%\bibliographystyle{plain}

\providecommand\href[2]{#2} \providecommand\url[1]{\href{#1}{#1}}
  \def\DOI#1{{\small {DOI}:
  \href{http://dx.doi.org/#1}{#1}}}\def\DOIURL#1#2{{\small{DOI}:
  \href{http://dx.doi.org/#2}{#1}}}

\mbox{}{\let\thefootnote\relax\footnotetext{%
\begin{otherlanguage}{russian}%
Полностью регулярные коды в бесконечной гексагональной решетке

С.\,В. Августинович, Д.\,С. Кротов, А.\,Ю. Васильева

\emph{Аннотация}. Множество $C$ вершин простого графа называется полностью регулярным кодом, если для любых $i=0,1,2,...$ и $j=i-1,i,i+1$ все вершины на расстоянии $i$ от $C$ имеют одно и то же число $s_{ij}$ соседей на расстоянии $j$ от $C$. Приведена характеризация полностью регулярных кодов в графе бесконечной гексагональной решетки.

\emph{Ключевые слова}: полностью регулярный код, совершенная раскраска, регулярное разбиение, гексагональная решетка
\end{otherlanguage}
}
\end{document}